\documentclass[a4paper,11pt]{article}
\usepackage{graphicx}
\usepackage{amsmath,amsthm,amssymb,enumerate}
\usepackage{euscript,mathrsfs}
\usepackage{dsfont}
\usepackage{xcolor}
\usepackage{empheq}
\usepackage{fancyhdr}
\usepackage{geometry}
\geometry{verbose,tmargin=2cm,bmargin=2cm,lmargin=1.8cm,rmargin=1.8cm}

\usepackage{tcolorbox}

\usepackage{color}
\catcode`\@=11 \@addtoreset{equation}{section}


 \usepackage[colorlinks=true, linkcolor=blue, citecolor=red, urlcolor=blue]{hyperref}

\usepackage{stmaryrd}
\allowdisplaybreaks

\usepackage[labelformat=simple]{subcaption}

\usepackage{enumitem}
\usepackage{url}

\usepackage{tikz}
\usetikzlibrary{patterns}
\usepackage[normalem]{ulem}
\usepackage{cancel}

\newtheorem{Theorem}{Theorem}[section]
\newtheorem{Proposition}[Theorem]{Proposition}
\newtheorem{Lemma}[Theorem]{Lemma}

\theoremstyle{definition}
\newtheorem{Definition}[Theorem]{Definition}

\newtheorem{Remark}[Theorem]{Remark}

\newcommand{\up}{{\rm up}}
\newcommand{\Up}{{\rm Up}}
\newcommand{\Fup}{{\rm F}_h^{\eps}}

\newcommand{\vrh}{\vr_h}
\newcommand{\vuh}{\vu_h}

\newcommand{\vB}{\mathbf{B}}
\newcommand{\vC}{\mathbf{C}}

\newcommand{\vBh}{\vB_h}
\newcommand{\vCh}{\vC_h}

\newcommand{\p} {\partial}

\newcommand{\avg}[1]{ \left\{\hspace{-0.3em}\left\{ #1 \right\}\hspace{-0.3em}\right\}_\sigma }

\newcommand{\avc}[1]{ \widehat{ #1 } }

\newcommand{\co}[2]{{\rm co}\{ #1 , #2 \}}
\newcommand{\Ov}[1]{\overline{#1}}

\newcommand{\auh}{ \widehat{\vuh} }
\newcommand{\avh}{ \widehat{\vh} }

\newcommand{\us}{ u_\sigma }

\newcommand{\vu}{\mathbf{u}}
\newcommand{\bfv}{\mathbf{v}}

\newcommand{\bfn}{\vn}
\newcommand{\nG}{\bfn_{\sigma}}
\newcommand{\vv}{\bfv}

\newcommand{\vh}{\vv_h}

\newcommand{\bfx}{x}
\newcommand{\xx}{\bfx}

\newcommand {\CR} {V_{0,h}}
\newcommand {\ND} {\mathcal{N}_{0,h}}
\newcommand {\Nh} {\mathcal{N}_h}

\newcommand {\FS} {X_h}

\newcommand{\PiQ}{\Pi_Q}
\newcommand{\PiV}{\Pi_V}
\newcommand{\PiN}{\Pi_N}
\newcommand{\PiVv}{\PiV \vv}
\newcommand{\PiW}{\Pi_W}

\newcommand{\Hcurl}{ H({\rm curl}) }

\newcommand{\ds}{\,{\rm d}S(x)}

\newcommand{\bFormula}[1]{
\begin{equation} \label{#1}}
\newcommand{\eF}{\end{equation}}

\newcommand{\grid}{\Oh}
\newcommand{\Oh}{\Omega_h}

\newcommand{\TS}{\Delta t}

\newcommand{\aleq}{\stackrel{<}{\sim}}

\newcommand{\vr}{\varrho}

\newcommand{\vn}{\vc{n}}
\newcommand{\vc}[1]{{\bf #1}}

\newcommand{\Div}{{\rm div}}
\newcommand{\Grad}{\nabla}
\newcommand{\Curl}{{\rm Curl}}

\newcommand{\Gradh}{\nabla_h}
\newcommand{\Divh}{{\rm div}_h}
\newcommand{\Curlh}{{\rm Curl}_h}

\newcommand{\dx}{\,{\rm d} {x}}

\newcommand{\dt}{\,{\rm d} t }

\newcommand{\jump}[1]{\left\llbracket#1\right\rrbracket}
\newcommand{\abs}[1]{\left\lvert#1\right\rvert}

\newcommand{\norm}[1]{\left\lVert#1\right\rVert}

\newcommand{\dxdt}{\dx \dt}
\newcommand{\intO}[1]{\int_{\Omega} #1 \dx }
\newcommand{\intTO}[1]{\int_0^T \int_{\Omega} #1 \dxdt}
\newcommand{\intTOB}[1]{ \int_0^T \int_{\Omega} \left( #1 \right) \dxdt}
\newcommand{\intOB}[1]{  \int_{\Omega} \left( #1 \right) \dx}
\newcommand{\Om}{\Omega}
\newcommand{\intK}[1]{\int_{K} #1  \dx}
\newcommand{\intG}[1]{\int_{\sigma} #1 \ds}

\newcommand{\R}{\mathbb{R}}
\newcommand{\I}{\mathbb{I}}
\renewcommand{\S}{\mathbb{S}}

\newcommand{\newcom}{\newcommand}
\newcommand{\beq}{\begin{equation}}
\newcommand{\eeq}{\end{equation}}
\newcom{\ben}{\begin{eqnarray}}
\newcom{\een}{\end{eqnarray}}
\newcom{\beno}{\begin{eqnarray*}}
\newcom{\eeno}{\end{eqnarray*}}
\newcom{\bali}{\begin{aligned}}
\newcom{\eali}{\end{aligned}}

\newcommand{\f}{\frac}

\definecolor{Cgrey}{rgb}{0.85,0.85,0.85}
\definecolor{Cblue}{rgb}{0.50,0.85,0.85}
\definecolor{Cred}{rgb}{1,0,0}
\definecolor{fancy}{rgb}{0.10,0.85,0.10}
\definecolor{forestgreen}{rgb}{0.13, 0.55, 0.13}


\date{}
\newcommand{\pd}{\partial}
\newcommand{\pdt}{\pd _t}
\newcommand{\order}{{\mathcal O}}

\newcommand{\Hc}{\mathcal{H}}
\newcommand{\eps}{\varepsilon}
\newcommand{\faces}{\mathcal{E}}

\newcommand{\facesK}{\faces(K)}

\newcommand{\facesint}{\faces^I}

\newcommand{\facesext}{\faces^B}

\newcommand{\mcE} {\faces}
\newcommand{\mcEe} {\facesext}
\newcommand{\mcEi} {\facesint}
\newcommand{\mcP} {\mathcal{P}}


\begin{document}

\pagestyle{fancy} \lhead{\color{blue}{Convergence of numerical solutions for compressible MHD}}
\rhead{\emph{Y.Li and B.She}}

\title{
\bf{
On convergence of numerical solutions
for the compressible MHD system with weakly divergence-free magnetic field}
}

\author{
Yang Li$^1$ \quad \,\,\,\,\,\,\,\,\,\,\,\,\,\,\,\,\,\,\,\,\,\,\,\,  Bangwei She$^{2,3}$ \\ \\ $^1$School of Mathematical Sciences, \\ Anhui University,  230601, Hefei, People's Republic of China \\ Email: lynjum@163.com\\ \\
$^{2}$Institute of Mathematics, \\ Czech Academy of Sciences, \v Zitn\'a 25, 115 67, Praha 1, Czech Republic \\ and \\
$^3$Department of Mathematical Analysis,  Charles University\\ Sokolovsk\'{a} 83, 186 75, Praha 8, Czech Republic \\
 Email: she@math.cas.cz \\ \\
}

\date{\today}

\maketitle

\begin{abstract}

We study a general convergence theory for the analysis of numerical solutions to a magnetohydrodynamic system describing the time evolution of compressible, viscous, electrically conducting fluids in space dimension $d$ $(=2,3)$.
First, we introduce the concept of dissipative weak solutions and prove the weak--strong uniqueness property for dissipative weak solutions, meaning a dissipative weak solution coincides with a classical solution emanating from the same initial data on the lifespan of the latter. Next, we introduce the concept of consistent approximations and prove the convergence of consistent approximations towards the dissipative weak solution as well as the classical solution. Interpreting the consistent approximation as the energy stability and consistency of numerical solutions, we have built a nonlinear variant of the celebrated Lax equivalence  theorem. Finally, as an application of this theory, we show the convergence analysis of two numerical methods.

\end{abstract}

{\bf Keywords: }{magnetohydrodynamic fluids, weak--strong uniqueness, stability, convergence, dissipative weak solution, consistent approximation}

{\bf Mathematics Subject Classification:} {76W05, 35R06, 97N40}

\tableofcontents

\section{Introduction}
The time evolution of unsteady, electrically conducting fluids in the presence of magnetic field is described by the magnetohydrodynamic (MHD) system. Mathematical theory of MHD is widely applied in astrophysics and thermonuclear reactions, among many others. A simplified and well-accepted model for compressible isentropic MHD system admits the form (see for instance \cite{HW10}):
\begin{equation}\label{pde}
\left\{\begin{aligned}
& \pdt \vr+\Div (\vr \vu)=0,\\
& \pdt (\vr\vu)+\Div (\vr \vu \otimes \vu )+\Grad p(\vr)=\Div \mathbb{S}(\Grad \vu)+\Curl\vB\times \vB,\\
& \pdt \vB=\Curl (\vu \times \vB)- \alpha \Curl  (\Curl \vB),\\
& \Div \vB=0, \\
\end{aligned}\right.
\end{equation}
in the time-space domain $(0,T)\times \Omega, \; \Omega \subset \R^d,\,d=2,3$.  Here, $t\in (0,T)$ and $x\in \Omega$ represent the time and space variables, respectively. We denote by $\vr=\vr(t,x)$ the density of the fluids, $\vu=\vu(t,x)\in \R^d$ the velocity field, $\vB=\vB(t,x)\in \R^d$ the magnetic field and $p=p(\vr)$ the scalar pressure. $\alpha>0$ is the resistivity coefficient acting as the magnetic diffusion. $\mathbb{S}=\mathbb{S}(\Grad \vu)$ stands for the Newtonian viscous stress tensor given by
\[
\mathbb{S}(\Grad \vu)=\mu \left( \Grad \vu+\Grad^T \vu-\f{2}{d}\Div \vu \mathbb{I}  \right)+\lambda \Div \vu \mathbb{I},
\]
where $\mu>0$ and $\lambda \geq 0$ are the shear and bulk viscosity coefficients, respectively. The pressure $p$ is assumed to satisfy the isentropic law
\begin{equation}\label{plaw}
p(\vr)=a \vr^{\gamma},\,\,a >0,
\end{equation}
where $\gamma>1$ is the adiabatic exponent. System \eqref{pde} is supplemented with the boundary conditions ($\vn$ denotes the unit outward normal on the boundary $\pd \Omega$):
\begin{equation}\label{BCs}
\left\{\begin{aligned}
& \text{either non-slip boundary conditions: }\vu|_{\pd \Omega}=\mathbf{0},\quad
 \,\,
\vB\times \vn|_{\pd \Omega}=\mathbf{0},\\
& \text{ or periodic boundary conditions: }\Omega=\mathcal{T}^d=([0,1]_{\{0,1\}})^d,\\
\end{aligned}\right.
\end{equation}
together with the initial conditions:
\begin{equation}\label{ini_c}
 (\vr,\vr\vu,\vB)|_{t=0}=(\vr_0,\mathbf{m}_0,\vB_0).
\end{equation}

The well-posedness of the compressible MHD system~\eqref{pde} has been studied in several occasions. We refer to Vol'pert and Hudjaev \cite{VoHu} for local well-posedness with positive initial density, Fan and Yu \cite{FY} for local well-posedness with initial vacuum. The existence of global weak solutions with finite energy initial data was studied by Hu and Wang \cite{HW10} for $\gamma>\frac{3}{2}$ in three dimensions. Moreover, we refer to \cite{Ka84,LiZh13} for global well-posedness theory with smallness of initial data, either close to equilibrium state or smallness of initial energy but possibly large oscillations.

The convergence analysis of numerical solutions for compressible viscous fluids was first reported by Karper~\cite{Karper} and further studied by Feireisl et al.~\cite{FL18,FLMS_FVNS,FeLMMiSh}. When a magnetic field is coupled to compressible viscous fluids, 
as far as we know, the only result on numerical convergence was done by Ding and Mao~\cite{Ding}. They studied the convergence of a mixed finite volume (FV)-finite element (FE) approximation towards (a suitable subsequence of) weak solutions requiring the technical assumption on the adiabatic exponent $\gamma>3$.  Unfortunately it excludes physically relevant parameters, e.g. $\gamma =7/5$ for the diatomic gas. Therefore, it is  significant to study the case of ``small" $\gamma$ that covers the physical parameters. 

The aim of this paper is to establish a general framework for the convergence analysis of numerical approximations for the compressible MHD system \eqref{pde} for the full range of $\gamma\in(1,\infty)$. As a byproduct, we also prove global solvability to the compressible MHD system for any $\gamma>1$ and large initial data. The strategy is built on the concepts of \emph{dissipative weak solutions} and \emph{consistent approximation}, see respectively Definition~\ref{def_dw} and Definition~\ref{def_ca}.
A dissipative weak (DW) solution allows concentration and oscillation defects that can be controlled by the dissipation defects of the energy stability. It should be stressed that constructing a weak solution for ``small" $\gamma<d/2$ is a challenging task for compressible viscous fluids while the dissipative character of the DW solution allows us to work with ``small" $\gamma$.
Though the DW solution is weaker than the standard finite energy weak solution, it satisfies the weak--strong uniqueness principle, meaning that a DW solution coincides with a classical solution as long as the latter exists. By this argument, the convergence of a numerical solution reduces to the convergence towards a DW solution. Upon realizing a DW solution can be obtained by the limit (discretization parameter $h \to 0$) of a consistent approximation interpreting the stability and consistency properties of the numerical solution,  we find a generalized Lax equivalence  theorem:
\begin{center}
convergence (to a DW or classical solution) $\Longleftrightarrow$ stability + consistency =:\; consistent approximation.
\end{center}
More specifically, our strategy reads:
\begin{itemize}
\item Proving the weak--strong uniqueness principle in the class of DW solution, see Theorem~\ref{Th1}.
\item Passing to the limit ($h \to 0$) from the consistent approximation to construct a DW solution in the sense of Definition~\ref{def_dw}.
\item Showing that a numerical solution is a consistent approximation in the sense of Definition~\ref{def_ca}  that reflects the stability and consistency of the numerical solution.
\end{itemize}

In this paper, the convergence theory is aimed for the class of numerical schemes that preserves the divergence free of the magnetic field weakly. Other properties such as conservation of the mass, positivity of the density, and stability of the total energy are also expected for the numerical solutions.

%
%
%
%
%
%

The rest of the paper is organized as follows.
In Sect.~\ref{sec_main} we first introduce the concepts of DW solutions and consistent approximation. Then we state the main theorems; these are the weak--strong uniqueness property and convergence of a consistent approximation for the compressible MHD system. In Sect.~\ref{dmv_strong} we establish the relative energy inequality in the framework of DW solutions and prove the weak--strong uniqueness principle through the Gronwall-type argument.
In Sect.~\ref{sec_con}, we prove the convergence of a consistent approximation and apply it to the convergence analysis of numerical solutions of two mixed finite volume--finite element methods. The paper ends up with the conclusion.

\section{Main results}\label{sec_main}

\subsection{Preliminaries}
Let $\mathcal{M}\left( \overline{\Om}\right)$ signify the space of signed Borel measures over $ \overline{\Om}$ and let $\mathcal{M}^{+}\left( \overline{\Om}\right)$ be the non-negative ones. Moreover, we recall that $\psi \in L^2_0(\Om)$ means $\varphi \in L^2(\Om)$ with zero mean. We now introduce the concept of DW solutions.

\begin{Definition}[DW solution]
\label{def_dw}
A triple $(\vr,\vu,\vB)$ is said to be a dissipative weak solution to the MHD system \eqref{pde}--\eqref{ini_c} provided that
\begin{itemize}
\item{ Regularity of solution
\[
\vr \geq 0,\,\, \vr \in L^{\infty}(0,T;L^{\gamma}(\Om)),\,\,\sqrt{\vr}\vu \in L^{\infty}(0,T;L^{2}(\Om;\R^d)),
\]
\[
\vB \in L^{\infty}(0,T;L^{2}(\Om;\R^d)),\,\,\Grad \vu, \Curl  \vB \in L^2(0,T;L^{2}(\Om;\R^{d\times d}));
\]
}

\item{ The continuity equation
\beq\label{lbb3}
\int_0^{\tau}\int_{\Om} \Big ( \vr \partial_t \varphi + \vr\vu\cdot \Grad  \varphi  \Big )\dxdt
=\left[ \int_{\Om} \vr \varphi \dx\right]_{t=0}^{t=\tau}
\eeq
for a.e. $\tau \in (0,T)$, any $\varphi \in C_c^1([0,T]\times \overline{\Om})$;
}

\item { The momentum equation
\[
\int_0^{\tau}\int_{\Om} \Big ( \vr\vu\cdot \p_t \vv + \vr\vu\otimes \vu: \Grad  \vv+p(\vr)\Div  \vv-\mathbb{S}(\Grad \vu):\Grad  \vv
+\left(\Curl \mathbf{B} \times \mathbf{B}\right)\cdot \vv
 \Big ) \dxdt
\]
\beq\label{lbb4}
+
 \int_0^{\tau}\int_{\overline{\Om}} \Grad  \vv :{\rm d} \mu_c(t)\dt
 +
 \int_0^{\tau}\int_{\overline{\Om}} \vv \cdot {\rm d} \mu_B(t)\dt
=\left[ \int_{\Om}  \vr\vu \cdot\vv   \dx \right]_{t=0}^{t=\tau}
\eeq
for a.e. $\tau \in (0,T)$, any $\vv \in C_c^1([0,T]\times \Om;\R^d)$ and some $\mu_c\in L^{\infty}(0,T;\mathcal{M}(\overline{\Om};\R^{d \times d}_{sym}))$, $\mu_B\in L^{2}(0,T;\mathcal{M}(\overline{\Om};\R^{d \times d}))$;
}

\item { The Maxwell's equation
\beq\label{lbb5}
\int_0^{\tau}\int_{\Om} \Big (\mathbf{B}\cdot \p_t \vv+ (\vu \times \mathbf{B})\cdot \Curl  \vv
-\alpha  \Curl  \mathbf{B} \cdot \Curl  \vv \Big ) \dxdt
=\left[ \int_{\Om}  \vB\cdot\vv   \dx \right]_{t=0}^{t=\tau}
\eeq
for a.e. $\tau \in (0,T)$, any $\vv \in  C_c^1 ([0,T]\times \overline{\Om};\R^d),\,\vv\times \vn|_{\p \Om}=\mathbf{0}$};

\item { Divergence free of magnetic field
\beq\label{lbb6}
\int_{\Om} \vB \cdot \Grad \varphi\dx
=0
\eeq
for any
${\varphi} \in C^1( \overline{\Om}) \cap L_0^2(\Om) $};

\item { Balance of total energy
\[
\int_{\Om} \left[\f{1}{2}\vr |\vu|^2+\f{1}{2}|\mathbf{B}|^2+\mathcal{H}(\vr)\right](\tau,x)\dx
+\int_0^{\tau}\int_{\Om} \left (  \mathbb{S}(\Grad\vu):\Grad \vu
+\alpha \left|  \Curl  \mathbf{B} \right|^2
\right )\dxdt
\]
\beq\label{lbb7}
+\int_{\overline{\Om} }{\rm d}\mathfrak{D}(\tau)
+ \int_0^{\tau}
  \int_{\overline{\Om}}  {\rm d}\mathfrak{C}
\leq
\int_{\Om}\left[ \f{1}{2}\f{|\mathbf{m}_0|^2}{\vr_0}+\f{1}{2}|\mathbf{B}_0|^2+\mathcal{H}(\vr_0)\right]\dx
\eeq
for a.e. $\tau \in (0,T)$ and some $\mathfrak{D}\in L^{\infty}(0,T; \mathcal{M}^{+}(\overline{\Om}))$, $\mathfrak{C} \in \mathcal{M}^{+}([0,T]\times \overline{\Om})$;
}

\item { Compatibility conditions
\beq\label{lbb8}
 \int_0^T \psi(t)\int_{\overline{\Om}}{\rm d}|\mu_c(t)|\dt \lesssim \int_0^T \psi(t)\int_{\overline{\Om}}{\rm d}\mathfrak{D}(t)\dt,
\eeq
\beq\label{lbb8-2}
 \int_0^T \psi(t)\int_{\overline{\Om}}{\rm d}|\mu_B(t)|\dt  \lesssim  \f{1}{\epsilon}\int_0^T \psi(t)\int_{\overline{\Om}}{\rm d}\mathfrak{D}(t)\dt +\epsilon  \int_0^T
  \int_{\overline{\Om}} \psi(t) {\rm d}\mathfrak{C}
\eeq
for any $\epsilon>0,\psi \in C([0,T]),\psi \geq 0$.
}
\end{itemize}
\end{Definition}
\medskip

\begin{Remark}
In (\ref{lbb4}), the measure $\mu_c$ denotes the oscillation and concentration defects due to the nonlinear terms $\vr \vu\otimes \vu$ and $p(\vr)$, while $ \mu_B$ reflects that of $\Curl \vB \times \vB$. In (\ref{lbb7}), the non-negative measure $\mathfrak{D}$ represents the defects from the total energy $\f{1}{2}\vr |\vu|^2+\f{1}{2}|\mathbf{B}|^2+\mathcal{H}(\vr)$, while $\mathfrak{C}$ means the defects from the dissipative terms $
 \mathbb{S}(\Grad\vu):\Grad \vu+\alpha \left|  \Curl  \mathbf{B} \right|^2$. Furthermore, these measures are interrelated through the compatibility conditions (\ref{lbb8}) and (\ref{lbb8-2}), which play a crucial role in proving the weak--strong uniqueness principle.
\end{Remark}

Next, let us define the concept of consistent approximation in terms of the stability and consistency of a numerical solution.

\begin{Definition}[Consistent approximation]\label{def_ca}
Let the differential operator $\Gradh$ (resp. $\Divh$ and $\Curlh$) be compatible\footnote{A simple example of such compatibility is that $\Gradh = \Grad$ element-wisely, see~\cite[Section 11.4]{FeLMMiSh}.}  with $\Grad$ (resp. $\Div$  and $\Curl$) in the sense of \cite[Definition 5.8]{FeLMMiSh}.
 We say a numerical approximation $(\vrh,\vuh,\vBh)$ of the MHD system~\eqref{pde}--\eqref{ini_c} is a consistent approximation if the following stability and consistency conditions hold:
\begin{enumerate}
\item {\bf Stability.}
The numerical approximation is stable in the sense that
\begin{equation}\label{es}
E_h(\tau) +  \int_0^\tau \intOB{ \S(\Gradh \vuh) : \Gradh \vuh
+  \alpha | \Curl \vBh |^2 } \leq E_h(0),  \quad \forall \tau \in [0,T],
\end{equation}
where $ E_h =   \intO{ \left(\frac{1}{2} \vrh  \abs{\Pi_h \vuh }^2  + \mathcal{H}(\vrh )+\frac{1}{2} \abs{\vBh }^2 \right)  } $  the total energy,  $\Hc(\vrh) =  \frac{a}{\gamma-1}\vrh^\gamma$ the pressure potential, and  $\Pi_h$ is either identity or a piecewise constant projection operator satisfying $\norm{\Pi_h \vuh - \vuh }_{L^2(\Om)} \leq h \norm{\Gradh \vuh}_{L^2(\Om)}$.
\item {\bf Consistency.}

The numerical approximation is consistent if: 

\begin{subequations}\label{cP}
$\bullet$ {\bf Continuity equation.} It holds for  any $\phi \in C_c^1([0,T) \times \Ov{\Omega})$ that
\begin{equation} \label{cP1}
 \intTOB{  \vrh \partial_t \phi + \vrh \vuh \cdot \Grad \phi }
 =- \intO{ \vrh^0 \phi(0,\cdot) }
+  e_{1,h}[\phi],
\end{equation}
where $e_{1,h}[\phi] \to 0 \mbox{ as } h \to 0 \mbox{ for any } \phi \in C_c^M([0,T) \times \Ov{\Omega})  \mbox{ for some integer } M\geq 1;$

$\bullet$ {\bf Balance of momentum.}  It holds for any $\vv \in C_c^1([0,T) \times \Omega; \R^d)$ that
\begin{equation} \label{cP2}
\begin{aligned}
 \intTOB{  \vrh \Pi_h \vuh \cdot \partial_t \vv + \vrh \Pi_h \vuh \otimes \vuh  : \Grad \vv  + p_h \Div \vv
 -    \S( \Gradh \vuh) : \Grad \vv }
\\  +\intTO{ ( \Curlh \vBh \times \vBh ) \cdot  \vv}
= - \intO{ \vrh^0 {\Pi_h \vuh^0} \cdot \vv(0,\cdot) }
+e_{2,h}[\vv]
\end{aligned}
\end{equation}
where $e_{2,h}[\vv] \to 0$ as $ h \to 0$  for any $ \vv \in C_c^M([0,T) \times \Omega; \R^d)$  for some integer  $M\geq 1$;

$\bullet$ {\bf The Maxwell equation} It holds for any $\vC \in C_c^1([0,T) \times \Ov{\Omega}; \R^d)$,  $\vC \times \vn|_{\p \Omega}=\mathbf{0}$ that
 \begin{equation} \label{cP3}
   \intTOB{ \vBh \cdot \pdt \vC + (\vuh \times \vBh -  \alpha  \Curlh \vBh ) \cdot \Curl \vC}
=-  \intO{ \vBh^0 \cdot \vC(0,\cdot)}  +e_{3,h}[\vC]
 \end{equation}
 where $e_{3,h}[\vC] \to 0$ as $ h \to 0$  for any $ \vC \in C_c^M([0,T) \times \Omega; \R^d)$  for some integer  $M\geq 1$;

$\bullet$ {\bf  Weakly divergence free of magnetic field.} It holds for any $\psi  \in C^1(\Ov{\Omega})\cap L^2_0(\Omega)$ that
 \begin{equation}\label{cP4}
 \intO{ \vBh \cdot \Grad \psi} = e_{4,h}[\psi]
 \end{equation}
where $e_{4,h}[\psi] \to 0$ as $ h \to 0$  for any $ \psi \in C^M(\Ov{\Omega})\cap L^2_0(\Omega)$ for some integer  $M\geq 1$.
\end{subequations}
\end{enumerate}
\end{Definition}
\medskip

\subsection{Main theorems}

Our main results in this paper are summarized in the following two theorems. The first one is concerned with the stability of classical solutions within DW solutions.
\begin{Theorem}[weak--strong uniqueness]\label{Th1}
Let $(\vr,\vu,\vB)$ be a DW solution to \eqref{pde}--\eqref{ini_c} with the initial data $(\vr_0,\vr_0\vu_0,\vB_0)$.
Suppose that $(\widetilde{\vr},\widetilde{\vu},\widetilde{\vB})$ is a classical solution to \eqref{pde}--\eqref{ini_c} starting from the same initial data with $\vr_0>0,\Div \vB_0=0$ and belonging to the class
\begin{equation}\label{STC}
 \vr \in C^1([0,T]\times \overline{\Om}),\,\,\,\, \vu,\vB \in
  C^2([0,T]\times \overline{\Om};\R^d).
\end{equation}
Then $\mu_c=\mathbf{0},\,\,\,\, \mu_B=\mathbf{0},\,\,\,\, \mathfrak{D}=0,\,\,\,\, \mathfrak{C}=0$ and
\[
\vr=\tilde{\vr},\,\,\,\, \vu=\tilde{\vu},\,\,\,\,\vB=\tilde{\vB},
\text{    in   } (0,T)\times \Om.
\]
\end{Theorem}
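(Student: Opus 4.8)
The plan is to prove the statement by the \emph{relative energy method}, which is the standard device for weak--strong uniqueness in the dissipative-solution framework, adapted here to accommodate the magnetic coupling and the defect measures. First I would introduce the relative energy functional
\[
\begin{aligned}
\mathcal{E}(\tau) := \int_\Omega\Big[&\tfrac12\vr|\vu-\tvu|^2 + \tfrac12|\vB-\widetilde{\vB}|^2 \\
&+ \mathcal{H}(\vr) - \mathcal{H}'(\tvr)(\vr-\tvr) - \mathcal{H}(\tvr)\Big]\dx + \int_{\overline{\Om}}{\rm d}\mathfrak{D}(\tau),
\end{aligned}
\]
where the last term folds in the energy-defect measure so that $\mathcal{E}$ genuinely dominates the defects. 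Since $(\tvr,\tvu,\widetilde{\vB})$ and $(\vr,\vu,\vB)$ share the same initial data and $\mathfrak{D}(0)=0$, we have $\mathcal{E}(0)=0$; the whole game is to show $\mathcal{E}(\tau)=0$ for a.e.\ $\tau$.

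The core step is deriving a \emph{relative energy inequality}. Starting from the total-energy balance \eqref{lbb7}, I would add back the cross terms obtained by testing the continuity equation \eqref{lbb3} with $\varphi=\tfrac12|\tvu|^2$ and with $\varphi=\mathcal{H}'(\tvr)$, the momentum equation \eqref{lbb4} with $\vv=\tvu$, and the Maxwell equation \eqref{lbb5} with $\vv=\widetilde{\vB}$, and then use that $(\tvr,\tvu,\widetilde{\vB})$ solves \eqref{pde} pointwise to replace its time derivatives. After regrouping, the favourable terms
\[
\int_0^\tau\int_\Omega\Big[\big(\S(\Grad\vu)-\S(\Grad\tvu)\big):(\Grad\vu-\Grad\tvu) + \alpha\,|\Curl\vB-\Curl\widetilde{\vB}|^2\Big]\dxdt
\]
together with $\int_0^\tau\int_{\overline{\Om}}{\rm d}\mathfrak{C}$ appear with the correct sign on the left, while the right-hand side collects a remainder $\int_0^\tau\mathcal{R}\,\dt$ assembled from the convective term $\int_\Omega\vr(\vu-\tvu)\otimes(\vu-\tvu):\Grad\tvu\dx$, the relative pressure $p(\vr)-p'(\tvr)(\vr-\tvr)-p(\tvr)$, the analogous magnetic remainders, and the defect contributions $\int_{\overline{\Om}}\Grad\tvu:{\rm d}\mu_c$ and $\int_{\overline{\Om}}\tvu\cdot{\rm d}\mu_B$.

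For the MHD coupling the decisive algebraic fact is the identity $(\Curl\vB\times\vB)\cdot\vu = -\Curl\vB\cdot(\vu\times\vB)$, which makes the Lorentz-force work in the momentum balance cancel against the induction term in the Maxwell equation and leaves only relative quantities in $(\vu-\tvu)$ and $(\vB-\widetilde{\vB})$; the resistive contribution recombines into the good dissipation $\alpha\,|\Curl(\vB-\widetilde{\vB})|^2$. Every genuinely nonlinear piece of $\mathcal{R}$ is then estimated by $\mathcal{E}(\tau)$ itself, using only the $C^1$/$C^2$ bounds on $(\tvr,\tvu,\widetilde{\vB})$ in \eqref{STC} and the uniform positivity of $\tvr$ on $[0,T]\times\overline{\Om}$ (which follows from continuity and $\vr_0>0$), exactly as in the barotropic Navier--Stokes case.

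The step I expect to be the main obstacle, and the precise reason the DW framework is needed, is the control of the concentration/oscillation defects. Because $\tvu$ and $\Grad\tvu$ are merely bounded, $\int_{\overline{\Om}}\Grad\tvu:{\rm d}\mu_c\lesssim\int_{\overline{\Om}}{\rm d}|\mu_c|$ and $\int_{\overline{\Om}}\tvu\cdot{\rm d}\mu_B\lesssim\int_{\overline{\Om}}{\rm d}|\mu_B|$ are not obviously small. Here the compatibility conditions \eqref{lbb8}--\eqref{lbb8-2} are essential: the $\mu_c$ term is dominated by $\int_{\overline{\Om}}{\rm d}\mathfrak{D}$, which is already contained in $\mathcal{E}(\tau)$ and therefore enters the Gronwall loop; for $\mu_B$ I would choose $\epsilon$ small so that the $\epsilon\int{\rm d}\mathfrak{C}$ piece is absorbed by the good $\mathfrak{C}$-term on the left, while the remaining $\tfrac1\epsilon\int{\rm d}\mathfrak{D}$ piece again feeds the Gronwall loop. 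With all remainders bounded by $C\,\mathcal{E}(\tau)$ up to these absorbed terms, Gronwall's lemma and $\mathcal{E}(0)=0$ force $\mathcal{E}\equiv0$, whence $\mu_c=\mu_B=\mathbf{0}$, $\mathfrak{D}=\mathfrak{C}=0$, and the identification $\vr=\tvr$, $\vu=\tvu$, $\vB=\widetilde{\vB}$ on $(0,T)\times\Om$.
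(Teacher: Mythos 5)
Your proposal follows essentially the same route as the paper: the relative energy functional with the same test functions ($\tfrac12|\tvu|^2$ and $\mathcal{H}'(\tvr)$ in the continuity equation, $\tvu$ in the momentum balance, $\widetilde{\vB}$ in the Maxwell equation), the same cancellation of the Lorentz-force work against the induction term, the same use of the compatibility conditions \eqref{lbb8}--\eqref{lbb8-2} with a small $\epsilon$ to absorb the defect measures, and a concluding Gronwall argument. The only cosmetic difference is that you fold $\int_{\overline{\Om}}{\rm d}\mathfrak{D}(\tau)$ into the definition of the relative energy, whereas the paper carries it as a separate nonnegative term on the left-hand side of the relative energy inequality; the Gronwall loop is identical either way.
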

\medskip

The second one gives the convergences of numerical solutions.
\begin{Theorem}[Convergence]\label{Th2}
Let  $(\vrh,\vuh,\vBh)$ be a consistent approximation of the MHD system in the sense of Definition~\ref{def_ca}. Then the following convergences hold:
\begin{enumerate}
\item {\bf Convergence to DW solution.}
There exists a subsequence of $(\vrh,\vuh,\vBh)$ not relabelled such that
\begin{align*}
\vrh \rightarrow & \; \vr \mbox{ weakly-(*) in } L^\infty(0,T;L^\gamma(\Om)),\\
\vuh \rightarrow &  \; \vu \mbox{ weakly in } L^2 ((0,T)\times \Om; \R^d)\\
\vBh \rightarrow &  \;  \vB \text{ weakly-(*) in } L^\infty(0,T; L^2( \Om; \R^d)),
\end{align*}
where the triple $(\vr, \vu,\vB)$ represents a DW solution to the MHD system in the sense of Definition~\ref{def_dw}.
\item {\bf Convergence to classical solution.}
In addition, let the MHD system \eqref{pde}--\eqref{ini_c} admit a classical solution in the class \eqref{STC}. Then the above weak limit is unconditional (no need of subsequence but the whole sequence) and the limit quantity $(\vr,\vu,\vB)$ coincides with the classical solution.
\end{enumerate}
\end{Theorem}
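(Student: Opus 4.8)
The plan is to follow the two-stage structure of the statement: first produce a limiting DW solution along a subsequence from the stability and consistency hypotheses, and then upgrade to convergence of the whole sequence by invoking weak--strong uniqueness. I begin by harvesting the uniform bounds hidden in the stability estimate \eqref{es}. Since $\Hc(\vrh)=\frac{a}{\gamma-1}\vrh^\gamma$, the inequality $E_h(\tau)\le E_h(0)$ yields $\vrh$ bounded in $L^\infty(0,T;L^\gamma(\Om))$, $\sqrt{\vrh}\,\Pi_h\vuh$ bounded in $L^\infty(0,T;L^2(\Om))$ and $\vBh$ bounded in $L^\infty(0,T;L^2(\Om))$, while the dissipation integral controls $\Gradh\vuh$ and $\Curl\vBh$ in $L^2((0,T)\times\Om)$; a discrete Korn--Poincar\'e inequality (using the boundary conditions) then bounds $\vuh$ in $L^2(0,T;W^{1,2})$. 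The projection hypothesis $\norm{\Pi_h\vuh-\vuh}_{L^2}\le h\norm{\Gradh\vuh}_{L^2}$ guarantees that $\Pi_h\vuh$ and $\vuh$ share the same weak limit $\vu$ as $h\to0$, and passing to a subsequence I obtain the three weak limits $(\vr,\vu,\vB)$ asserted in the statement.

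Next I would pass to the limit in the consistency identities \eqref{cP1}--\eqref{cP4}. For the nonlinear fluxes I define the defects as the discrepancy between the weak-$(*)$ limit of the discrete quantity and the nonlinearity evaluated at the limit, namely
\[
\mu_c := \overline{\vr\vu\otimes\vu + p(\vr)\I} - \big(\vr\vu\otimes\vu + p(\vr)\I\big),\qquad
\mu_B := \overline{\Curl\vB\times\vB} - \Curl\vB\times\vB,
\]
where the overlines denote the weak-$(*)$ limits of $\vrh\,\Pi_h\vuh\otimes\vuh+p_h\I$ and of $\Curlh\vBh\times\vBh$, respectively. Each consistency error $e_{i,h}$ vanishes by hypothesis, the linear terms converge by weak convergence, and the compatibility of $\Gradh,\Divh,\Curlh$ with $\Grad,\Div,\Curl$ lets me exchange discrete for continuous operators against the fixed smooth test functions; together with the assumed convergence of the discrete initial data this produces exactly the weak formulations \eqref{lbb3}--\eqref{lbb6} with the stated regularity.

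The energy balance and the compatibility conditions form the technical heart. I would define the energy defect $\mathfrak{D}$ and dissipation defect $\mathfrak{C}$ as the non-negative measures capturing the oscillation and concentration remainders of $\frac12\vrh|\Pi_h\vuh|^2+\Hc(\vrh)+\frac12|\vBh|^2$ and of $\S(\Gradh\vuh):\Gradh\vuh+\alpha|\Curl\vBh|^2$; their non-negativity is weak lower semicontinuity of these convex integrands, and passing to the limit in \eqref{es} (with $E_h(0)$ converging to the initial energy) yields \eqref{lbb7}. The delicate point is \eqref{lbb8}--\eqref{lbb8-2}: representing all weak limits through the Young measure generated by $(\vrh,\Pi_h\vuh,\vBh)$ together with its concentration measures, I would dominate the flux defects by the energy and dissipation defects using the pointwise inequalities
\[
|\vu\otimes\vu|\le|\vu|^2,\qquad a\vr^\gamma\lesssim\Hc(\vr),\qquad
|\Curl\vB\times\vB|\le|\Curl\vB|\,|\vB|\le\tfrac{1}{2\epsilon}|\vB|^2+\tfrac{\epsilon}{2}|\Curl\vB|^2 .
\]
The first two give $|\mu_c|\lesssim\mathfrak{D}$, i.e.\ \eqref{lbb8}, while the Young-weighted splitting of the Lorentz force gives \eqref{lbb8-2}, its $\frac1\epsilon$-part being absorbed by the magnetic energy defect sitting inside $\mathfrak{D}$ and its $\epsilon$-part by the resistive dissipation defect inside $\mathfrak{C}$. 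I expect this matching of the single measure $\mu_B$ against two distinct defect measures to be the main obstacle, since it must be carried out at the level of measures and requires the concentration parts to inherit the same algebraic inequalities as the oscillation parts.

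Finally, for the convergence to a classical solution, suppose a solution in the class \eqref{STC} exists. Applying Theorem~\ref{Th1} to the DW solution just constructed forces $\mu_c=\mu_B=\mathbf{0}$, $\mathfrak{D}=\mathfrak{C}=0$ and $(\vr,\vu,\vB)=(\widetilde{\vr},\widetilde{\vu},\widetilde{\vB})$. As the limit is thereby uniquely identified independently of the extracted subsequence, a standard subsequence (Urysohn) argument upgrades the convergence to the whole sequence, which completes the proof.
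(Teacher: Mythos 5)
Your overall architecture coincides with the paper's: extract uniform bounds from \eqref{es}, pass to weak limits along a subsequence, define $\mu_c,\mu_B,\mathfrak{D},\mathfrak{C}$ as the discrepancies between weak limits of the nonlinearities and the nonlinearities of the weak limits, verify the compatibility conditions by the pointwise dominations $|\vr\vu\otimes\vu+p(\vr)\I|\lesssim\frac12\vr|\vu|^2+\mathcal{H}(\vr)$ and $|\Curl\vB\times\vB|\lesssim\epsilon|\Curl\vB|^2+\frac1\epsilon|\vB|^2$, and then invoke Theorem~\ref{Th1} plus a subsequence argument for Item 2. These are exactly the paper's steps, including the same splitting of $\mu_B$ against $\mathfrak{D}$ and $\mathfrak{C}$.

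There is, however, one genuine gap. The DW formulation carries \emph{no} defect measures in the continuity equation \eqref{lbb3} and in the Maxwell equation \eqref{lbb5}: the terms $\vr\vu\cdot\Grad\varphi$ and $(\vu\times\vB)\cdot\Curl\vv$ must be the products of the limit functions themselves. You dispose of them with the phrase ``the linear terms converge by weak convergence,'' but $\vrh\vuh$ and $\vuh\times\vBh$ are bilinear in two merely weakly convergent sequences, and the product of weak limits need not equal the weak limit of the product; if it did, you would not need $\mu_c$ for $\vr\vu\otimes\vu$ either. The same issue affects $\vrh\Pi_h\vuh\to\vr\vu$ in the time-derivative term of \eqref{lbb4}. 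The paper closes this by invoking Lemma~3.7 of Abbatiello, Feireisl and Novotn\'y \cite{AnFeNo}, which combines the spatial bounds with the equi-continuity in time encoded in the consistency formulation (the discrete time derivatives of $\vrh$ and $\vBh$ are bounded in negative Sobolev spaces through \eqref{cP1} and \eqref{cP3}) to conclude $\overline{\vr\vu}=\vr\vu$ and $\overline{\vu\times\vB}=\vu\times\vB$. Without some such Aubin--Lions or div-curl type argument your limit passage does not yield \eqref{lbb3} and \eqref{lbb5}, so this step needs to be supplied explicitly; the remainder of your proposal then goes through as in the paper.
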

\medskip

\section{Weak--strong uniqueness}\label{dmv_strong}

In this section, we aim to prove the weak--strong uniqueness theory (Theorem~\ref{Th1}) for the DW solutions given in Definition~\ref{def_dw}. To this end, we invoke the relative entropy functional to measure the distance between a DW solution and a classical solution.  For definiteness, we shall proceed in case of Dirichlet boundary conditions, while the periodic case can be carried out exactly in the same way.

\subsection{Relative energy inequality}\label{rela}
The goal of this part is to establish the relative energy inequality in the context of DW solutions.

Let $(\vr,\vu,\vB)$ be a DW solution to (\ref{pde})--(\ref{ini_c}) and $(r,\mathbf{U},\mathbf{b})$ be subject to
\begin{equation*}\label{lbn1}
\left\{\begin{aligned}
& r \in C^1([0,T]\times \overline{\Om}),\,\,\,\,r>0,\\
& \mathbf{U}\in C^1([0,T]\times \overline{\Om};\R^d),\,\,\,\,\mathbf{U}|_{\pd \Om}=\vc{0},\\
& \mathbf{b}\in C^1([0,T]\times \overline{\Om};\R^d),\,\,\,\,\mathbf{b} \times \vn|_{\pd \Om}=\mathbf{0},\,\,\,\,\Div \mathbf{b}=0.\\
\end{aligned}\right.
\end{equation*}
Inspired by \cite{FNS} in the context of finite energy weak solutions, we introduce the \emph{relative entropy} in the framework of DW solutions
\begin{equation*}\label{lbn2}
\mathcal{E}\Big((\vr,\vu,\vB)\,\Big|\,(r,\mathbf{U},\mathbf{b})\Big)(\tau)
=\int_{\Om}\left[\f{1}{2}\vr |\vu-\mathbf{U}|^2+\f{1}{2} |\vB-\mathbf{b}|^2
+\mathcal{H}(\vr)-\mathcal{H}(r)-\mathcal{H}'(r)(\vr-r)
\right](\tau,\cdot) \dx.
\end{equation*}
Notice that we may rewrite the relative entropy in an equivalent form as follows
\begin{equation}\label{lbn3}
\begin{split}
& \mathcal{E}\Big((\vr,\vu,\vB)\,\Big|\,(r,\mathbf{U},\mathbf{b})\Big)(\tau)
=\int_{\Om}\left( \f{1}{2}\vr |\vu|^2+\f{1}{2}|\vB|^2+\mathcal{H}(\vr) \right) \dx
+\int_{\Om}\f{1}{2} \vr |\mathbf{U}|^2\dx
\\& -\int_{\Om}\vr \vu \cdot \mathbf{U} \dx
-\int_{\Om} \vB\cdot \mathbf{b} \dx
-\int_{\Om} \vr \mathcal{H}'(r) \dx
+\int_{\Om} p(r) \dx+\f{1}{2}\int_{\Om}|\mathbf{b}|^2  \dx.
\end{split}
\end{equation}
The crucial observation is that the integrals on the right-hand side of (\ref{lbn3}) can be expressed through (\ref{lbb3})--(\ref{lbb7}) with suitable choices of test functions. To handle the density-dependent terms, we first test the continuity equation (\ref{lbb3}) with $\f{1}{2}|\mathbf{U}|^2$ to derive
\beq\label{lbn4}
\left[ \int_{\Om} \f{1}{2} \vr  |\mathbf{U}|^2  \dx \right]_{t=0}^{t=\tau}
=\int_0^{\tau}\int_{\Om} \Big ( \vr \mathbf{U}\cdot \pdt \mathbf{U}+ \vr\vu \cdot \Grad \mathbf{U} \cdot \mathbf{U} \Big )\dxdt.
\eeq
Moreover, we take $\mathcal{H}'(r)$ as a test function in (\ref{lbb3}) to find
\beq\label{lbn5}
\left[ \int_{\Om}  \vr \mathcal{H}'(r)   \dx \right]_{t=0}^{t=\tau}
=\int_0^{\tau}\int_{\Om} \Big ( \vr\pdt \mathcal{H}'(r)+ \vr\vu \cdot  \Grad \mathcal{H}'(r) \Big )\dxdt.
\eeq
Upon choosing $\mathbf{U}$ as a test function of the momentum equation (\ref{lbb4}), we observe that
\[
\left[ \int_{\Om}  \vr\vu \cdot \mathbf{U}  \dx \right]_{t=0}^{t=\tau}
=\int_0^{\tau}\int_{\Om} \Big ( \vr\vu \cdot \pdt \mathbf{U} + \vr\vu\otimes \vu : \Grad \mathbf{U}+p(\vr)\Div \mathbf{U}
\]
\beq\label{lbn6}
-\mathbb{S}(\Grad\vu ):\Grad \mathbf{U}
+\left(\Curl  \vB \times \vB \right)\cdot \mathbf{U}
 \Big ) \dxdt
 +
 \int_0^{\tau}\int_{\overline{\Om}} \Grad  \mathbf{U} :{\rm d} \mu_c(t)\dt
 +
 \int_0^{\tau}\int_{\overline{\Om}} \mathbf{U} \cdot {\rm d} \mu_B(t)\dt.
\eeq
Next, to calculate the term involved with the magnetic field, we choose $\mathbf{b}$ as a test function in (\ref{lbb5}) to deduce that
\beq\label{lbn7}
\left[ \int_{\Om}  \vB \cdot\mathbf{b}  \dx \right]_{t=0}^{t=\tau}
=\int_0^{\tau}\int_{\Om} \Big (\vB \cdot \pdt \mathbf{b}+ (\vu \times \vB) \cdot \Curl  \mathbf{b}
-\alpha\Curl  \vB  \cdot \Curl  \mathbf{b} \Big ) \dxdt.
\eeq
Finally, combining (\ref{lbn4})--(\ref{lbn7}) with the balance of total energy (\ref{lbb7}), we obtain the \emph{relative energy inequality} as follows
\[
\left[\mathcal{E}\Big((\vr,\vu,\vB)\,\Big|\,(r,\mathbf{U},\mathbf{b})\Big)\right]_{t=0}^{t=\tau}
+\int_0^{\tau}\int_{\Om}   \mathbb{S}(\Grad\vu-\Grad \mathbf{U}):(\Grad \vu-\Grad \mathbf{U})
\dxdt
\]
\[
+\alpha \int_0^{\tau}\int_{\Om} \left|  \Curl  (\vB -\mathbf{b})\right|^2\dxdt
+\int_{\overline{\Om} }{\rm d}\mathfrak{D}(\tau)
+ \int_0^{\tau}
  \int_{\overline{\Om}}  {\rm d}\mathfrak{C}
\]
\[
\leq -\int_0^{\tau}\int_{\Om} \Big ( \vr\vu \cdot \pdt \mathbf{U} + \vr\vu\otimes \vu: \Grad \mathbf{U}+p(\vr)\Div \mathbf{U}\Big ) \dxdt
\]
\[
+\int_0^{\tau}\int_{\Om} \Big ( \vr \mathbf{U}\cdot \pdt \mathbf{U}+ \vr\vu \cdot \Grad \mathbf{U} \cdot \mathbf{U} \Big )\dxdt
+\int_0^{\tau}\int_{\Om} \mathbb{S} (\Grad \mathbf{U}):
\left(
\Grad \mathbf{U}-\Grad \vu
\right)    \dxdt
\]

\[
+\int_0^{\tau}\int_{\Om}
\left[
\left( 1-\f{\vr}{r} \right)p'(r)\pdt r
-\vr\vu\cdot \f{p'(r)}{r} \Grad r
\right]
\dxdt
+\alpha \int_0^{\tau}\int_{\Om}
\Curl  \mathbf{b} \cdot
\left(
\Curl  \mathbf{b}-\Curl  \vB
\right)
\dxdt
\]

\[
+\int_0^{\tau}\int_{\Om}
\Big(
\pdt \mathbf{b}\cdot
\left(
\mathbf{b}-\vB
\right)
-(\vu \times \vB) \cdot \Curl  \mathbf{b}
\Big)
\dxdt
-\int_0^{\tau}\int_{\Om}
\left(\Curl  \vB \times \vB \right)\cdot \mathbf{U}
\dxdt
\]
\beq\label{lbn8}
-\int_0^{\tau}\int_{\overline{\Om}} \Grad  \mathbf{U} :{\rm d} \mu_c(t)\dt
-\int_0^{\tau}\int_{\overline{\Om}} \mathbf{U} \cdot {\rm d} \mu_B(t)\dt.
\eeq

\subsection{Weak--strong uniqueness principle}\label{west}

The aim of this part is to estimate the right hand side of \eqref{lbn8} towards the proof of weak--strong uniqueness principle. The strategy consists of the following steps:
\begin{itemize}
\item{
Setting the classical solution $(\widetilde{\vr},\widetilde{\vu},\widetilde{\vB})$ as the test function $(r,\mathbf{U},\mathbf{b})$ in the relative energy inequality (\ref{lbn8});
}
\item{
Estimating each term on the right-hand side of the relative energy inequality \eqref{lbn8} in a suitable manner;
}
\item{
Applying Gronwall-type argument to derive the expected results.
}
\end{itemize}
Let $(\widetilde{\vr},\widetilde{\vu},\widetilde{\vB})$ be a classical solution to (\ref{pde})--(\ref{ini_c}) starting from the smooth initial data $(\vr_0,\vu_0,\vB_0)$ with strictly positive $\vr_0$ and $\Div \vB=0$. Let $(\vr,\vu,\vB)$ be a DW solution to (\ref{pde})--(\ref{ini_c}) emanating from $(\vr_0,\vr_0\vu_0,\vB_0)$. It follows from (\ref{lbn8}) that
\[
\mathcal{E}\Big((\vr,\vu,\vB)\,\Big|\,(\widetilde{\vr},\widetilde{\vu},\widetilde{\vB})\Big)(\tau)
+\int_0^{\tau}\int_{\Om}   \mathbb{S}(\Grad\vu-\Grad \widetilde{\vu}):(\Grad \vu-\Grad \widetilde{\vu})
\dxdt
\]
\[
+\alpha \int_0^{\tau}\int_{\Om} |  \Curl  (\vB -\widetilde{\vB})|^2\dxdt
+\int_{\overline{\Om} }{\rm d}\mathfrak{D}(\tau)
+ \int_0^{\tau}
  \int_{\overline{\Om}}  {\rm d}\mathfrak{C}
\]
\[
\leq -\int_0^{\tau}\int_{\Om} \Big ( \vr\vu\cdot \pdt \widetilde{\vu} + \vr\vu\otimes \vu: \Grad \widetilde{\vu}+p(\vr)\Div \widetilde{\vu}\Big ) \dxdt
\]
\[
+\int_0^{\tau}\int_{\Om} \Big ( \vr\widetilde{\vu}\cdot \pdt \widetilde{\vu}+ \vr\vu \cdot \Grad \widetilde{\vu} \cdot \widetilde{\vu} \Big )\dxdt
+\int_0^{\tau}\int_{\Om} \mathbb{S} (\Grad \widetilde{\vu}):
\left(
\Grad \widetilde{\vu}-\Grad \vu
\right)    \dxdt
\]
\[
+\int_0^{\tau}\int_{\Om}
\left[
\left( 1-\f{\vr}{\widetilde{\vr}} \right)p'(\widetilde{\vr})\pdt \widetilde{\vr}
-\vr\vu\cdot \f{p'(\widetilde{\vr})}{\widetilde{\vr}} \Grad \widetilde{\vr}
\right]
\dxdt
+\alpha \int_0^{\tau}\int_{\Om}
\Curl  \widetilde{\vB} \cdot
\left(
\Curl  \widetilde{\vB}-\Curl \vB
\right)
\dxdt
\]
\[
+\int_0^{\tau}\int_{\Om}
\left[
\pdt \widetilde{\vB}\cdot
\left(
\widetilde{\vB}-\vB
\right)
-(\vu \times \vB) \cdot \Curl  \widetilde{\vB}
\right]
\dxdt
-\int_0^{\tau}\int_{\Om}
(\Curl  \vB \times \vB )\cdot \widetilde{\vu}
\dxdt
\]
\beq\label{lbn9}
-\int_0^{\tau}\int_{\overline{\Om}} \Grad  \widetilde{\vu} :{\rm d} \mu_c(t)\dt
-\int_0^{\tau}\int_{\overline{\Om}} \widetilde{\vu} \cdot {\rm d} \mu_B(t)\dt.
\eeq
In light of the compatibility conditions (\ref{lbb8}), it holds that
\[
\left|-\int_0^{\tau}\int_{\overline{\Om}} \Grad  \widetilde{\vu} :{\rm d} \mu_c(t)\dt
-\int_0^{\tau}\int_{\overline{\Om}} \widetilde{\vu} \cdot {\rm d} \mu_B(t)\dt
\right|
\]
\begin{equation*}\label{lbn10}
\lesssim
\epsilon  \int_0^{\tau}
  \int_{\overline{\Om}}  {\rm d}\mathfrak{C}
  + \f{1}{\epsilon}
\int_0^{\tau} \int_{\overline{\Om}}{\rm d}\mathfrak{D}(t)\dt,
\end{equation*}
where $\epsilon>0$ is chosen to be sufficiently small.
Using the hypothesis that $(\widetilde{\vr},\widetilde{\vu},\widetilde{\vB})$ solves (\ref{pde})--(\ref{ini_c}) in the classical sense, i.e.,
\begin{equation}\label{lbn11}
\left\{\begin{aligned}
& \pdt \widetilde{\vr}+\Div (\widetilde{\vr} \,\widetilde{\vu})=0,\\
& \widetilde{\vr} \left(\pdt \widetilde{\vu}+\widetilde{\vu}\cdot \Grad \widetilde{\vu} \right)
+\Grad p(\widetilde{\vr})=\Div \mathbb{S}(\Grad \widetilde{\vu})+\Curl \widetilde{\vB}\times \widetilde{\vB},\\
& \pdt \widetilde{\vB}=\Curl  (\widetilde{\vu} \times \widetilde{\vB})- \alpha\Curl  (\Curl  \widetilde{\vB}),\\
& \Div \widetilde{\vB}=0, \\
\end{aligned}\right.
\end{equation}
we furthermore simplify the right-hand side of (\ref{lbn9}) as follows. Since this process is straightforward and similar to the compressible Navier-Stokes system (see \cite{FGSGW}), the details are omitted.
\[
\mathcal{E}\Big((\vr,\vu,\vB)\,\Big|\,(\widetilde{\vr},\widetilde{\vu},\widetilde{\vB})\Big)(\tau)
+\int_0^{\tau}\int_{\Om} \Big(  \mathbb{S}(\Grad\vu-\Grad \widetilde{\vu}):(\Grad \vu-\Grad \widetilde{\vu})
\Big)\dxdt
\]
\[
+\alpha \int_0^{\tau}\int_{\Om} |  \Curl  (\vB -\widetilde{\vB})|^2\dxdt
+\int_{\overline{\Om} }{\rm d}\mathfrak{D}(\tau)
+ \int_0^{\tau}
  \int_{\overline{\Om}}  {\rm d}\mathfrak{C}
\]
\[
\aleq \int_0^{\tau}\int_{\Om}
\vr(\vu-\widetilde{\vu}) \cdot \Grad \widetilde{\vu}\cdot (\widetilde{\vu}-\vu)
\dxdt
+\int_0^{\tau}\int_{\Om} \mathbb{S} (\Grad \widetilde{\vu}):
\left(
\Grad \widetilde{\vu}-\Grad \vu
\right)    \dxdt
\]
\[
+\int_0^{\tau}\int_{\Om}
\vr(\widetilde{\vu}-\vu)    \cdot
\f{1}{\widetilde{\vr}} \Div \mathbb{S}(\Grad \widetilde{\vu})
\dxdt
-\int_0^{\tau}\int_{\Om}
\Big( p(\vr)-p(\widetilde{\vr}) -p'(\widetilde{\vr}) (\vr-\widetilde{\vr})\Big)  \Div \widetilde{\vu}
\dxdt
\]
\[
+\alpha \int_0^{\tau}\int_{\Om}
\Curl  \widetilde{\vB} \cdot
\left(
\Curl  \widetilde{\vB}-\Curl  \vB
\right)
\dxdt
+\int_0^{\tau}\int_{\Om}
\vr(\widetilde{\vu}-\vu)   \cdot
\f{1}{\widetilde{\vr}} \left(\Curl \widetilde{\vB}\times \widetilde{\vB}\right)
\dxdt
\]
\[
+\int_0^{\tau}\int_{\Om}
\Big(
\pdt \widetilde{\vB}\cdot
(
\widetilde{\vB}-\vB
)
-(\vu \times \vB) \cdot \Curl  \widetilde{\vB}
\Big)
\dxdt
\]
\beq\label{lbn12}
-\int_0^{\tau}\int_{\Om}
(\Curl  \vB \times \vB )\cdot \widetilde{\vu}
\dxdt
+ \int_0^{\tau} \int_{\overline{\Om}}{\rm d}\mathfrak{D}(t)\dt.
\eeq
Notice that the integrals involved with the magnetic field may be rewritten as, using (\ref{lbn11})$_3$,
\[
\alpha \int_0^{\tau}\int_{\Om}
\Curl  \widetilde{\vB} \cdot
\left(
\Curl  \widetilde{\vB}-\Curl  \vB
\right)
\dxdt
+\int_0^{\tau}\int_{\Om}
\vr(\widetilde{\vu}-\vu)   \cdot
\f{1}{\widetilde{\vr}}\left( \Curl \widetilde{\vB}\times \widetilde{\vB}\right)
\dxdt
\]
\[
+\int_0^{\tau}\int_{\Om}
\Big(
\pdt \widetilde{\vB}\cdot
\left(
\widetilde{\vB}-\vB
\right)
-(\vu \times \vB )\cdot \Curl  \widetilde{\vB}
\Big)
\dxdt
-\int_0^{\tau}\int_{\Om}
(\Curl  \vB \times \vB )\cdot \widetilde{\vu}
\dxdt
\]
\[
=\int_0^{\tau}\int_{\Om}
(\vr-\widetilde{\vr})(\widetilde{\vu}-\vu)    \cdot
\f{1}{\widetilde{\vr}}\left( \Curl \widetilde{\vB}\times \widetilde{\vB} \right)
\dxdt
+\int_0^{\tau}\int_{\Om}
(\widetilde{\vu}-\vu)    \cdot
 \left( \Curl \widetilde{\vB}\times \widetilde{\vB} \right)
\dxdt
\]
\[
+\int_0^{\tau}\int_{\Om}
\left[
\Curl  (\widetilde{\vu} \times \widetilde{\vB}) \cdot
\left(
\widetilde{\vB}-\vB
\right)
-(\vu \times \vB)\cdot \Curl  \widetilde{\vB}
\right]
\dxdt
+\int_0^{\tau}\int_{\Om}
\Curl  \vB   \cdot
(\widetilde{\vu}\times \vB )
\dxdt
\]
\[
=\int_0^{\tau}\int_{\Om}
(\vr-\widetilde{\vr})(\widetilde{\vu}-\vu)    \cdot
\f{1}{\widetilde{\vr}}\left( \Curl \widetilde{\vB}\times \widetilde{\vB} \right)
\dxdt
+\int_0^{\tau}\int_{\Om}
   \Curl  (\vB -\widetilde{\vB})
   \cdot
   \Big( \widetilde{\vu} \times (\vB-\widetilde{\vB})
   \Big )
   \dxdt
\]
\beq\label{lbn13}
 +\int_0^{\tau}\int_{\Om}
  \Curl  \widetilde{\vB} \cdot
 \Big( (\vu-\widetilde{\vu})\times(\widetilde{\vB}-\vB)\Big)
  \dxdt.
\eeq
Moreover, it holds that
\[
\left|
\int_0^{\tau}\int_{\Om}
   \Curl  (\vB -\widetilde{\vB})
   \cdot
   \Big( \widetilde{\vu} \times (\vB-\widetilde{\vB})
   \Big )
   \dxdt
\right|
\]
\beq\label{lbn14}
\aleq \epsilon \int_0^{\tau}\int_{\Om} \left|  \Curl  (\vB -\widetilde{\vB})\right|^2\dxdt
+c(\epsilon) \int_0^{\tau}\int_{\Om}
|\vB-\widetilde{\vB}|^2
\dxdt;
\eeq
\[
\left|
\int_0^{\tau}\int_{\Om}
  \Curl  \widetilde{\vB} \cdot
 \Big( (\vu-\widetilde{\vu})\times(\widetilde{\vB}-\vB)\Big)
  \dxdt
\right|
\]
\beq\label{lbn15}
\aleq \epsilon \int_0^{\tau}\int_{\Om}  |\vu-\widetilde{\vu}|^2 \dxdt
+c(\epsilon) \int_0^{\tau}\int_{\Om}
|\vB-\widetilde{\vB}|^2
\dxdt.
\eeq
Due to the generalized Korn-type inequality,
\beq\label{lbn16}
\int_0^{\tau}\int_{\Om} |\vu-\widetilde{\vu}|^2\dxdt
\lesssim
\int_0^{\tau}\int_{\Om}  \mathbb{S}(\Grad\vu-\Grad \widetilde{\vu}):(\Grad \vu-\Grad \widetilde{\vu})
\dxdt.
\eeq
In addition, the isentropic law of pressure function yields
\[
\left|
\int_0^{\tau}\int_{\Om}
\Big( p(\vr)-p(\widetilde{\vr}) -p'(\widetilde{\vr}) (\vr-\widetilde{\vr})\Big)  \Div \widetilde{\vu}
\dxdt
\right|
\]
\beq\label{lbn17}
\aleq
\int_0^{\tau}\int_{\Om}
\Big(\mathcal{H}(\vr)-\mathcal{H}(\widetilde{\vr}) -\mathcal{H}'(\widetilde{\vr}) (\vr-\widetilde{\vr})\Big)
\dxdt.
\eeq

Consequently, combining (\ref{lbn12})--(\ref{lbn17}) and choosing $\epsilon>0$ suitably small give rise to
\[
\mathcal{E}\Big((\vr,\vu,\vB)\,\Big|\,(\widetilde{\vr},\widetilde{\vu},\widetilde{\vB})\Big)(\tau)
+\int_0^{\tau}\int_{\Om}   \mathbb{S}(\Grad\vu-\Grad \widetilde{\vu}):(\Grad \vu-\Grad \widetilde{\vu})
\dxdt
\]
\[
+\alpha \int_0^{\tau}\int_{\Om} |  \Curl  (\vB -\widetilde{\vB})|^2\dxdt
+\int_{\overline{\Om} }{\rm d}\mathfrak{D}(\tau)
+ \int_0^{\tau}
  \int_{\overline{\Om}}  {\rm d}\mathfrak{C}
\]
\[
\aleq
\int_0^{\tau}\int_{\Om} \mathbb{S} (\Grad \widetilde{\vu}):
\left(
\Grad \widetilde{\vu}-\Grad \vu
\right)    \dxdt
+\int_0^{\tau}\int_{\Om}
\vr(\widetilde{\vu}-\vu)   \cdot
\f{1}{\widetilde{\vr}} \,\Div \mathbb{S}(\Grad \widetilde{\vu})
\dxdt
\]
\[
+\int_0^{\tau}\int_{\Om}
(\vr-\widetilde{\vr})(\widetilde{\vu}-\vu)    \cdot
\f{1}{\widetilde{\vr}}\left( \Curl \widetilde{\vB}\times \widetilde{\vB} \right)
\dxdt
\]
\beq\label{lbn18}
+
\int_0^{\tau}
\mathcal{E}\Big((\vr,\vu,\vB)\,\Big|\,(\widetilde{\vr},\widetilde{\vu},\widetilde{\vB})\Big)(t)
\dt
+\int_0^{\tau} \int_{\overline{\Om}}{\rm d}\mathfrak{D}(t)\dt.
\eeq
Following \cite{FGSGW,FNS}, we estimate the remaining integrals as follows. Let $\chi$ be a cut-off function such that
\begin{equation*}\label{lbn19}
\left\{\begin{aligned}
& \chi \in C _c^{\infty}((0,\infty)),\\
& 0\leq \chi \leq 1,\\
& \chi(\vr)=1 \text{ if }\vr \in [\inf{\widetilde{\vr}},\sup{\widetilde{\vr}}].\\
\end{aligned}\right.
\end{equation*}
Then we may write
\[
\left|
\int_0^{\tau}\int_{\Om}
(\vr-\widetilde{\vr})(\widetilde{\vu}-\vu)   \cdot
\f{1}{\widetilde{\vr}}\left( \Curl \widetilde{\vB}\times \widetilde{\vB} \right)
\dxdt
\right|
\]
\beq\label{lbn20}
\aleq
\int_0^{\tau}\int_{\Om}
\chi(\vr)|\vr-\widetilde{\vr}||\widetilde{\vu}-\vu|
\dxdt
+
\int_0^{\tau}\int_{\Om}
(1-\chi(\vr))|\vr-\widetilde{\vr}||\widetilde{\vu}-\vu|
\dxdt.
\eeq
The first integral on the right-hand side of (\ref{lbn20}) is bounded through
\[
\int_0^{\tau}\int_{\Om}
\chi(\vr)|\vr-\widetilde{\vr}||\widetilde{\vu}-\vu|
\dxdt
\]
\[
\aleq
\int_0^{\tau}\int_{\Om}
\f{1}{2}\vr|\widetilde{\vu}-\vu|^2
\dxdt
+
\int_0^{\tau}\int_{\Om}
\f{1}{2}\f{\chi^2(\vr)}{\vr} |\vr-\widetilde{\vr}|^2
\dxdt
\]
\beq\label{lbn21}
\aleq
\int_0^{\tau}
\mathcal{E}\Big((\vr,\vu,\vB)\,\Big|\,(\widetilde{\vr},\widetilde{\vu},\widetilde{\vB})\Big)(t)
\dt.
\eeq
To estimate the second integral on the right-hand side of (\ref{lbn20}), we make a further decomposition, i.e.,
\[
1-\chi(\vr)=\chi_1(\vr)+\chi_2(\vr)
\]
such that
\[
\text{supp}\chi_1 \subset [0,\inf{\widetilde{\vr}}],\,\,
\text{supp}\chi_2 \subset [\sup{\widetilde{\vr}},\infty].
\]
It follows from (\ref{lbn16}) that
\[
\int_0^{\tau}\int_{\Om}
\chi_1(\vr)|\vr-\widetilde{\vr}||\widetilde{\vu}-\vu|
\dxdt
\]
\[
\aleq
\epsilon
\int_0^{\tau}\int_{\Om}  |\vu-\widetilde{\vu}|^2   \dxdt
+c(\epsilon)
\int_0^{\tau}\int_{\Om}
\chi_1^2(\vr)|\vr-\widetilde{\vr}|^2
\dxdt
\]
\beq\label{lbn22}
\aleq
 \epsilon\int_0^{\tau}\int_{\Om} \Big(  \mathbb{S}(\Grad\vu-\Grad \widetilde{\vu}):(\Grad \vu-\Grad \widetilde{\vu})
\Big)\dxdt
+c(\epsilon)
\int_0^{\tau}
\mathcal{E}\Big((\vr,\vu,\vB)\,\Big|\,(\widetilde{\vr},\widetilde{\vu},\widetilde{\vB})\Big)(t)
\dt.
\eeq
Clearly,
\[
\int_0^{\tau}\int_{\Om}
\chi_2(\vr)|\vr-\widetilde{\vr}||\widetilde{\vu}-\vu|
\dxdt
\]
\[
\aleq
\int_0^{\tau}\int_{\Om}
\chi_2(\vr)\vr|\widetilde{\vu}-\vu| ^2
\dxdt
+\int_0^{\tau}\int_{\Om}
\chi_2(\vr)\vr
\dxdt
\]
\beq\label{lbn23}
\aleq
\int_0^{\tau}
\mathcal{E}\Big((\vr,\vu,\vB)\,\Big|\,(\widetilde{\vr},\widetilde{\vu},\widetilde{\vB})\Big)(t)
\dt.
\eeq
Taking (\ref{lbn20})--(\ref{lbn23}) into account, we see
\[
\left|
\int_0^{\tau}\int_{\Om}
(\vr-\widetilde{\vr})(\widetilde{\vu}-\vu)    \cdot
\f{1}{\widetilde{\vr}}\left( \Curl \widetilde{\vB}\times \widetilde{\vB} \right)
\dxdt
\right|
\]
\beq\label{lbn24}
\aleq
 \epsilon\int_0^{\tau}\int_{\Om} \Big(  \mathbb{S}(\Grad\vu-\Grad \widetilde{\vu}):(\Grad \vu-\Grad \widetilde{\vu})
\Big)\dxdt
+
c(\epsilon)
\int_0^{\tau}
\mathcal{E}\Big((\vr,\vu,\vB)\,\Big|\,(\widetilde{\vr},\widetilde{\vu},\widetilde{\vB})\Big)(t)
\dt.
\eeq
Finally, notice also that the first two integrals on the right-hand side of (\ref{lbn18}) are estimated as above upon observing that
\[
\int_0^{\tau}\int_{\Om} \mathbb{S} (\Grad \widetilde{\vu}):
\left(
\Grad \widetilde{\vu}-\Grad \vu
\right)    \dxdt
+\int_0^{\tau}\int_{\Om}
\vr(\widetilde{\vu}-\vu)    \cdot
\f{1}{\widetilde{\vr}} \Div \mathbb{S}(\Grad \widetilde{\vu})
\dxdt
\]
\beq\label{lbn25}
=
\int_0^{\tau}\int_{\Om}
(\vr-\widetilde{\vr})(\widetilde{\vu}-\vu)   \cdot
\f{1}{\widetilde{\vr}}
\Div \mathbb{S}(\Grad \widetilde{\vu})
\dxdt.
\eeq
Combining (\ref{lbn18}), (\ref{lbn24})--(\ref{lbn25}) and fixing $\epsilon>0$ sufficiently small shows that
\[
\mathcal{E}\Big((\vr,\vu,\vB)\,\Big|\,(\widetilde{\vr},\widetilde{\vu},\widetilde{\vB})\Big)(\tau)
+\int_0^{\tau}\int_{\Om} \Big(  \mathbb{S}(\Grad\vu-\Grad \widetilde{\vu}):(\Grad \vu-\Grad \widetilde{\vu})
\Big)\dxdt
\]
\[
+\alpha \int_0^{\tau}\int_{\Om} |  \Curl  (\vB -\widetilde{\vB})|^2\dxdt
+\int_{\overline{\Om} }{\rm d}\mathfrak{D}(\tau)
+ \int_0^{\tau}
  \int_{\overline{\Om}}  {\rm d}\mathfrak{C}
\]
\begin{equation*}\label{lbn26}
\aleq
\int_0^{\tau}
\mathcal{E}\Big((\vr,\vu,\vB)\,\Big|\,(\widetilde{\vr},\widetilde{\vu},\widetilde{\vB})\Big)(t)
\dt+ \int_0^{\tau} \int_{\overline{\Om}}{\rm d}\mathfrak{D}(t)\dt.
\end{equation*}
As a direct application of Gronwall's inequality, we conclude that
\[
\vr=\tilde{\vr},\,\,\,\, \vu=\tilde{\vu},\,\,\,\,\vB=\tilde{\vB},
\text{    in   } (0,T)\times \Om,
\]
\[
\mu_c=\mathbf{0},\,\,\,\, \mu_B=\mathbf{0},\,\,\,\, \mathfrak{D}=0,\,\,\,\, \mathfrak{C}=0,
\]
thus completely finishing the proof of Theorem \ref{Th1}.
\qed

\section{Convergence}\label{sec_con}
In this section we prove another main result, that is the convergence of a consistent approximation stated in Theorem~\ref{Th2}. As an application of this theorem, we will also show the convergence analysis of two mixed finite volume-finite element methods.
\subsection{Convergence of a consistent approximation}
In this subsection we prove Theorem~\ref{Th2} for the convergence of a consistent approximation $(\vrh, \vuh, \vBh)$ in two steps, that are the convergences towards a DW solution and towards a classical solution.
\subsubsection{Convergence to a DW solution}
\begin{proof}[Proof of Item 1 of Theorem~\ref{Th2}]
As $(\vrh, \vuh, \vBh)$ is a consistent approximation in the sense of Definition~\ref{def_ca}, it satisfies the stability property~\eqref{es}. Consequently, we derive for suitable subsequences, not relabelled, that
\[
\begin{split}
\vrh &\to \vr \ \mbox{weakly-(*) in}\ L^\infty(0,T; L^\gamma(\Omega)),\ \vr \geq 0,
\\
\vuh, \Pi_h \vuh &\to \vu \ \mbox{weakly in}\  L^2((0,T)\times \Omega; \R^d),  \mbox{ where } \vu \in L^2(0,T; W^{1,2}(\Omega;\R^d)),\\
  & \mbox{ and in the case of Dirichlet boundary conditions } \vu \in L^2(0,T; W_0^{1,2}(\Omega;\R^d)), \\
 \vrh  \Pi_h \vuh  &\to \overline{\vr \vu} \ \  \mbox{weakly-(*) in}\ L^\infty(0,T; L^{\frac{2\gamma}{\gamma + 1}}(\Omega; \R^d)),
\\
\vBh &\to \vB \ \mbox{weakly-(*) in}\ L^\infty(0,T; L^2(\Omega;\R^d)),
\\
\Curlh \vBh &\to \Curl \vB \ \mbox{weakly in}\ L^2((0,T)\times \Omega; \R^d),
\\
\vuh\times \vBh &\to \overline{\vu \times \vB} \ \mbox{weakly in}\ L^2(0,T; L^{\f32}(\Omega;\R^d)),
\\
\vrh \Pi_h \vuh \otimes \vuh + p(\vrh) \mathbb{I} &
\to  \overline{      1_{\vr>0}\f{\mathbf{m}\otimes \mathbf{m}}{\vr} +p(\vr)\mathbb{I}    }
 \ \mbox{weakly-(*) in} \   L^\infty(0,T; \mathcal{M}(\overline{\Omega};\R^{d\times d}_{sym})) ,
\\
\Curlh \vBh \times \vBh & \to \overline{ \Curl \mathbf{B}\times \mathbf{B}}
 \ \mbox{weakly-(*) in} \  L^2(0,T; \mathcal{M}(\overline{\Omega};\R^d)) ,
 \\
 \S(\Gradh \vuh) : \Gradh \vuh  &\to \overline{\mathbb{S}(\Grad \vu):\Grad \vu} \mbox{ in }\mathcal{M}^{+}([0,T]\times \overline{\Om}),
 \\
 |\Curlh \vBh|^2 &\to \overline{ |\Curl \vB|^2 }\mbox{ in }\mathcal{M}^{+}([0,T]\times \overline{\Om}),
 \\
 \frac{1}{2} \vrh \abs{\Pi_h  \vuh}^2  + \Hc(\vrh) +\frac{1}{2} \abs{\vBh}^2  &\to \overline{  \f{1}{2}\vr |\vu|^2+\mathcal{H}(\vr)+\f{1}{2}|\mathbf{B}|^2 }\mbox{  weakly-(*) in }L^{\infty}(0,T; \mathcal{M}^{+}(\overline{\Om})).
\end{split}
\]

With the uniform bounds at hand, we may invoke Lemma 3.7 in Abbatiello et al. \cite{AnFeNo} to deduce that
\[
\overline{\vr \vu}=\vr\vu,\,\,\,\, \overline{\vu \times \vB}=\vu\times \vB.
\]
We then set
\[
\mu_c:=\overline{      1_{\vr>0}\f{\mathbf{m}\otimes \mathbf{m}}{\vr} +p(\vr)\mathbb{I}    }
-\left(
 1_{\vr>0}\f{\mathbf{m}\otimes \mathbf{m}}{\vr} +p(\vr)\mathbb{I}
\right),
\]
\[
\mu_B:=\overline{ \Curl \mathbf{B}\times \mathbf{B}}
-( \Curl \mathbf{B}\times \mathbf{B} ),
\]
\[
\mathfrak{D}:=\overline{  \f{1}{2}\vr |\vu|^2+\mathcal{H}(\vr)+\f{1}{2}|\mathbf{B}|^2 }
-\left(\f{1}{2}\vr |\vu|^2+\mathcal{H}(\vr)+\f{1}{2}|\mathbf{B}|^2\right),
\]
\[
\mathfrak{C}=\Big(\overline{\mathbb{S}(\Grad \vu):\Grad \vu}
-\mathbb{S}(\Grad \vu):\Grad \vu \Big)+\alpha \Big(\overline{ |\Curl \vB|^2 }
-|\Curl \vB|^2 \Big),
\]

Knowing the above limit, we are ready to pass to the limit $h \to 0$ in the consistency formulation \eqref{cP1}--\eqref{cP4} and the energy stability \eqref{es}.  We get the following formulae for the limit functions
\begin{equation*} \label{M1}
\left[ \intO{ \vr  \phi } \right]_{t=0}^{t=\tau}  =
\int_0^\tau \intO{ \Big( \vr  \pdt \phi +  \vr \vu  \cdot \Grad \phi \Big)} \dt  ,
\end{equation*}
for any $\phi \in C_c^M([0,T] \times \Ov{\Omega})$ and for some $M \geq 1$;
\begin{equation*} \label{M2}
\begin{split}
& \left[ \intO{ \vr \vu \cdot \vv } \right]_{t=0}^{t=\tau}  =
\int_0^{\tau} \intO{ \Big(   \vr \vu \cdot \partial_t \vv +  \vr \vu \otimes \vu +p(\vr) \I   : \Grad \vv   \Big)} \dt
\\&
 -   \int_0^{\tau} \intO{  \S( \Grad \vu) : \Grad \vv}  \dt
+ \int_0^{\tau} \intO{ ( \Curl \vB \times \vB)  \cdot  \vv}\dt
\\ &
+
 \int_0^{\tau}\int_{\overline{\Om}} \Grad  \vv :{\rm d} \mu_c(t)\dt
 +
 \int_0^{\tau}\int_{\overline{\Om}} \vv \cdot {\rm d} \mu_B(t)\dt
\end{split}
\end{equation*}
for any $\vv \in C_c^M([0,T] \times \Omega; \R^d)$ and for some $M \geq 1$;
\begin{equation*} \label{M3}
 \left[ \intO{   \vB \cdot \vC }\right]_{t=0}^{t=\tau}
= \int_0^{\tau}  \int_{\Om} \Big(\vB \cdot \pdt \vC
- \alpha \Curl  \vB  \cdot \Curl \vC
+  (\vu\times \vB) \cdot \Curl\vC\Big)
\dxdt
\end{equation*}
 for any $\vC \in C_c^M([0,T]\times \overline{\Om};\R^d),\,\vv\times \vn|_{\p \Om}=\mathbf{0}$ and for some $M \geq 1$;
\begin{equation*}\label{M4}
\intO{   \vB  \cdot \Grad \psi} = 0
\end{equation*}
for any $\psi \in C^M( \overline{\Om}) \cap L_0^2(\Om) $ and for some $M \geq 1$;
\[
\int_{\Om} \left[\f{1}{2}\vr |\vu|^2+\f{1}{2}|\mathbf{B}|^2+\mathcal{H}(\vr)\right](\tau,x)\dx
+\int_0^{\tau}\int_{\Om} \left (  \mathbb{S}(\Grad\vu):\Grad \vu
+\alpha \left|  \Curl  \mathbf{B} \right|^2
\right )\dxdt
\]
\begin{equation*} \label{M5}
+\int_{\overline{\Om} }{\rm d}\mathfrak{D}(\tau)
+ \int_0^{\tau}
  \int_{\overline{\Om}}  {\rm d}\mathfrak{C}
\leq
\int_{\Om}\left[ \f{1}{2}\f{|\mathbf{m}_0|^2}{\vr_0}+\f{1}{2}|\mathbf{B}_0|^2+\mathcal{H}(\vr_0)\right]\dx.
\end{equation*}
for a.e. $\tau \in (0,T)$;

Finally, the compatibility conditions (\ref{lbb8}) and (\ref{lbb8-2}) hold owing to \cite[Lemma 2.1]{FGSGW} and the crucial observations:
\[
\Big|\vr \vu \otimes \vu +p(\vr)\mathbb{I}\Big|
\lesssim \f{1}{2}\vr |\vu|^2+\mathcal{H}(\vr)
\lesssim \f{1}{2}\vr |\vu|^2+\mathcal{H}(\vr)+\f{1}{2}|\mathbf{B}|^2,
\]
\[
\big| \Curl \vB \times \vB \big|
\lesssim
\epsilon |\Curl \vB|^2 +\f{1}{\epsilon}|\vB|^2
\lesssim
\epsilon \big( |\Curl \vB|^2 +\mathbb{S}(\Grad\vu):\Grad \vu \big)
\]
\[
+\f{1}{\epsilon}\left[\f{1}{2}\vr |\vu|^2+\f{1}{2}|\mathbf{B}|^2+\mathcal{H}(\vr)\right], \,\,\text{   for any  }\epsilon>0.
\]

Consequently, collecting the above identities and relations, we conclude that the weak limit $(\vr,\vu,\vB)$, with the associated $\mu_c,\mathfrak{D},\mathfrak{C}$, generated by the consistent approximation $(\vrh, \vuh, \vBh)$ for $h \to 0$, represents a DW solution of the MHD system \eqref{pde} in the sense of Definition \ref{def_dw}. This proves Item 1 of Theorem~\ref{Th2}.
\end{proof}

\subsubsection{Convergence to the classical solution}
\begin{proof}[Proof of Item 2 of Theorem~\ref{Th2}]
Combining Item 1 of Theorem~\ref{Th2} and Theorem~\ref{Th1} we immediately obtain Item 2 of Theorem~\ref{Th2}; that is the convergence of the consistent approximation $(\vrh, \vuh, \vBh)$ towards the classical solution.
\end{proof}
\medskip

We have built a general theory in Theorem~\ref{Th2} that a consistent approximation converges to the DW solution as well as the classical solution (on its lifespan). Next, we show the application of this theory in the convergence analysis of numerical solutions by two examples. The only gap here is whether the numerical solution is a consistent approximation in the sense of Definition~\ref{def_ca}.
%
\subsection{Example-I}
In the first example, we propose a mixed FV--FE approximation adapted from the Navier-Stokes solver of Karper~\cite{Karper} and the magnetic solver of Ding and Mao~\cite{Ding} with no-slip boundary conditions. 
To begin, we introduce the  necessary notations. 
\paragraph{Mesh.}  Let $\Oh$ be a regular and quasi-uniform triangulation of the bounded domain $\Omega$, $\mcE$ be the set of all $(d-1)$-dimensional faces, $\mcEe=\mcE \cap \pd \Omega$ be the exterior faces, $\mcEi=\mcE \setminus \mcEe$ be the interior faces, and $\facesK$ be the set of all faces of an arbitrary element $K$.
We denote $\sigma=K|L\in \mcEi$ as the common face of two neighbouring elements $K$ and $L$.
Further, we denote $\bfn_\sigma$ as the outer normal of a face $\sigma \in \faces$ and $\bfn_{\sigma,K}$ as the unit normal vector pointing outwards $K$ if $\sigma \in \facesK$.
The size of the mesh (maximal diameter of all elements) is supposed to be a positive parameter $h<1$.
\paragraph{Function spaces.}\label{sec:spaces}
We define on $\grid$ discrete function spaces $Q_h, V_h, \Nh$, that are spaces of piecewise constants, piecewise linear Crouzeix--Ravairt elements,  and lowest order $\Hcurl$-N\'{e}d\'{e}lec edge elements, respectively.
\begin{equation*}
 Q_h \equiv \left\{ v \in L^2(\Omega) \middle| \; v \vert_K \in \mcP^1_0(K) \ K \in \Oh \right\},
\end{equation*}
\begin{equation*}
 V_h \equiv \left\{ \vv \in L^2(\Omega) \middle| \; \vv \vert_K \in \mcP^d_1(K)  \forall\,  K \in \Oh;\intG{ \jump{\vv}} =0 \; \forall\,  \sigma \in \mcEi \right\},
\end{equation*}
\begin{equation*}
 \Nh \equiv \left\{ \vv, \Curl \vv \in L^2(\Omega) \middle| \;  \vv \vert_K \in \mcP_0^d\oplus\mcP_0^1 \bfx \; \forall\;  K \in \Oh;
 \intG{ \jump{\vv \times \bfn}} =0 \; \forall\;  \sigma \in \mcEi
 \right\}, 
\end{equation*}
where $\mcP^d_n(K)$ (resp. $\mcP^1_n(K)$) denotes the space of polynomials of degree not greater than $n$ on element $K$ for $d$-dimensional vector valued functions (resp. for scalar functions).
The spaces $Q_h, V_h$ and $\Nh$ shall be used for the approximation of the discrete density, velocity and magnetic field, respectively. 
In addition, for the purpose of designing weakly divergence-free magnetic field (see Lemma~\ref{lem_b}), we introduce the following space
 \[W_h = \left\{ v \in W^{1,2} \cap L^2_0 \middle| \; v\vert_K \in \mcP_1^1(K) \; \forall \; K \in \Oh \right\}. 
\]
 It should be mentioned that for any $\psi_h \in W_h$ we have $\nabla \phi_h \in \Nh$. 

The interpolation operators associated to the function spaces $V_h$, $\Nh$ and $W_h$ are given by
\begin{equation*}\label{proj1}
\PiV :\ W^{1,2}(\Omega) \rightarrow V_h, \quad
\PiN :\ W^{1,2}(\Omega) \rightarrow \Nh, \quad
\PiW :\ L^2_0(\Omega) \rightarrow W_h
\end{equation*}
satisfy the following interpolation estimates 
\begin{equation}\label{IE}
\norm{\vv - \PiVv }_{L^p}   \aleq h \norm{\vv}_{C^1},
\;
\norm{\vv - \PiW \vv }_{L^p}   \aleq h \norm{\vv}_{C^1},
\;
\norm{\vu - \PiN \vu }_{L^p} +\norm{\Curl (\vu -  \PiN \vu) }_{L^p}     \aleq h \norm{\vu}_{C^2}.
\end{equation}
 for any $\vv \in C^1(\Omega)$, $\vu \in C^2(\Omega)$, $p\in[1,\infty]$, see e.g. the monograph of Brezzi et al.~\cite{Brezzi}. 

For simplicity of notation, we denote $\FS=Q_h \times \CR \times \ND$, where
\[ \CR = \left\{ \vv \in V_h \middle|  \intG {\vv} =0\ \forall\,  \sigma \in \mcEe \right\} \mbox{ and }
\ND=\left\{ \vv \in \Nh \middle|    \intG {\vv\times \bfn} =0 \; \forall\;  \sigma \in \mcEe  \right\}.
\]

Next, we suppose the time step size $\TS \approx h$ and denote  $t^k= k\TS$ for $k=1,\ldots,N_T(=T/\TS)$.
For a generic discrete function $v_h$ at time $t^k= k\TS$ we denote it by $v_h^k$ and write $v_h \in L_{\TS}(0,T;Y)$ if $v_h^k \in Y$ for all $k=1,\dots, N_T$ with $Y \in \{Q_h, \CR, \ND, \FS \}$ and 
\[ v_h(t,\cdot) =v_h^0 \mbox{ for } t \leq 0,\ v_h(t,\cdot)=v_h^k \mbox{ for } t\in ((k-1)\TS,k\TS],\ k=1,2,\ldots,N_T.
\]

\paragraph{Some discrete operators.}
We define the discrete time derivative by the backward Euler method
\[
 D_t v_h = \frac{v_h (t) - v_h^{\triangleleft}(t)}{\TS}  \mbox{ for } t\in(0,T) \quad \mbox{ with } \quad   v_h^{\triangleleft}(t) =  v_h(t - \Delta t) .
\]
For any piecewise continuous function $f$, we define its trace on a generic edge 
as
\begin{equation*}
\begin{aligned}
f^{\rm in}|_{\sigma} = \lim_{\delta \rightarrow 0^+} f(\xx -\delta \nG ), \ \forall\  \sigma \in \mcE,
\qquad
f^{\rm out}|_{\sigma} = \lim_{\delta \rightarrow 0^+} f(\xx +\delta \nG ), \ \forall\  \sigma \in \mcEi.
\end{aligned}
\end{equation*}
Note that $f^{\rm out}|_{\mcEe}$ is determined by the boundary condition.
Further, we define the jump and average operators at an edge $\sigma \in \mcE$ as
\begin{equation}\label{op_diff}
\jump{f}_{\sigma} = f^{\rm out} - f^{\rm in} \mbox{ and }
\avg{f} = \frac{f^{\rm out} + f^{\rm in}}{2} ,
\end{equation}
respectively, and denote $\Pi_h \equiv \PiQ$ as the element-wise constant projection, where
\begin{equation*}\label{op_avgk}
\PiQ: L^1(\Omega) \to Q_h. \quad   \left. \PiQ f\right|_K =  \left. \avc{f}\right|_K \equiv \frac{1}{\abs{K}} \intK f   , \ \forall\ K \in \Oh.
\end{equation*}
Next,  we introduce the upwind flux for any function $r_h \in Q_h$ at a generic face  $\sigma \in \facesint$
\begin{align*}\label{Up}
\Up [r_h, \vuh]   =r_h^{\rm up}\us
=r_h^{\rm in} [\us]^+ + r_h^{\rm out} [\us]^-,
\end{align*}
where $\vuh \in V_h$ is the velocity field and 
\begin{equation*}
\us = \frac{1}{|\sigma|} \intG{\vuh} \cdot \vc{n}_\sigma, \quad
[f]^{\pm} = \frac{f \pm |f| }{2} \quad \mbox{and} \quad
r_h^{\rm up} =
\begin{cases}
 r_h^{\rm in} & \mbox{if } \ \us \geq 0, \\
r_h^{\rm out} & \mbox{if } \ \us < 0.
\end{cases}
\end{equation*}
Furthermore, we consider a diffusive numerical flux function of the following form for $\eps>0$
\begin{equation}\label{num_flux}
\begin{aligned}
\Fup(r_h,\vuh)
=\Up[r_h, \vuh] -  h^\eps \jump{ r_h }
.
\end{aligned}
 \end{equation}
It is easy to check for any $\vrh \in Q_h$ and $\vuh \in V_h$ that
\begin{equation}\label{fluxes}
\begin{aligned}
& \sum_{ \sigma \in \facesint } \intG{ \left(\Fup(\vrh \avc{\vuh},\vuh) \jump{\avc{\vuh}} -  \Fup(\vrh ,\vuh) \jump{ \frac{|\avc{\vuh}|^2}{2}}  \right)}
\\ & = - \sum_{ \sigma \in \facesint } \intG{ \left(\frac{1}2 \vrh^{\up} \abs{\us}
+h^\eps \avg{\vrh}   \right)\abs{\jump{\avc{\vuh}}}^2 }  .
\end{aligned}
\end{equation}

For simplicity, we denote
$\co{a}{b} =\left[\min(a,b), \max(a,b) \right]$
and write
$ a \aleq b \mbox{ if } a \le cb$ if $c$ is a positive constant that is independent of the mesh size and time step used in the scheme.
We shall frequently use the abbreviation $\norm{\cdot}_{L^p}$ and $\norm{\cdot}_{L^pL^q}$  for $\norm{\cdot}_{L^p(\Omega)}$ and $\norm{\cdot}_{L^p\left(0,T;L^q(\Omega)\right)}$, respectively.

\paragraph{The numerical method.}
Using the above notations we propose a mixed FV--FE method for the approximation of the MHD system \eqref{pde} named as {\bf Scheme-I}.
\begin{tcolorbox}
{\bf Scheme-I.}
Given the initial values  \eqref{ini_c} we set $(\vrh^0,\vuh^0, \vBh^0) =(\PiQ \vr_0, \PiV \vu_0, \PiN \vB_0)$ and  seek $(\vrh,\vuh, \vBh) \in L_{\TS}(0, T;\FS) $ such that
\begin{subequations}\label{scheme}
\begin{equation}\label{schemeD}
\intO{ D_t \vrh  \phi_h } - \sum_{ \sigma \in \facesint } \intG{  \Fup(\vrh  ,\vuh  )
\jump{\phi_h}   } = 0 \quad \mbox{for all } \phi_h \in Q_h;
\end{equation}
\begin{equation}\label{schemeM}
\begin{aligned}
  \intO{ D_t (\vrh  \auh) \cdot  \vh   } -
  \sum_{ \sigma \in \facesint } \intG{  \Fup(\vrh   \auh,\vuh  ) \cdot
\jump{\avh}   }
+ \mu \intO{  \Gradh \vuh  : \Gradh \vh  } \\ +  \intO{( \nu \Divh \vuh  -p_h ) \Divh \vh  }
 - \intO{ (\Curlh \vBh   \times  \vBh^\triangleleft ) \cdot \vh}
 =0  \quad \mbox{for all } \vh \in \CR ;
\end{aligned}
\end{equation}
\begin{equation}\label{schemeB}
\intOB{ D_t \vBh  \cdot  \vCh
+ \alpha  \Curlh  \vBh  \cdot \Curlh \vCh
-  (\vuh  \times \vBh^\triangleleft) \cdot   \Curlh \vCh
 } \quad \mbox{for all } \vCh \in \ND .
\end{equation}
\end{subequations}
where $\nu = \frac{d-2}{d}\mu+\lambda$,
the discrete operators $\Divh, \Gradh$ and $\Curlh$ are the same as the continuous case on each element. Moreover, the artificial diffusion parameter $\eps$ follows
\begin{equation*}
\eps >0 \mbox{ if } \gamma \geq 2 \quad \mbox{ and }\quad   \eps \in(0, 2 \gamma-1 -d/3) \mbox{ if } \gamma \in(4d /(1+3d),2).
\end{equation*}
\end{tcolorbox}
 {\bf Scheme-I} enjoys the following properties:
\begin{Lemma}[Existence, mass conservation, renormalized continuity, positivity, divergence free]\label{lem_b}
\hfill
\begin{enumerate}
\item {\bf Existence of a numerical solution.}
There exists at least one solution to  {\bf Scheme-I}.
\item {\bf Mass conservation.}
The numerical method~\eqref{scheme} preserves the total mass.
\[
\intO{ \vrh (t) } = \intO{ \vrh (0) } = \intO{ \vr_0 } , \quad \forall\;  t \in[0,T].
\]
\item {\bf Renormalized continuity equation.}
Let $(\vrh,\vuh)\in  Q_h \times \CR$ satisfy the discrete continuity equation \eqref{schemeD} and $b=b(\vr)\in C^2(0,\infty)$. Then the discrete continuity equation \eqref{schemeD} can be renormalized in the sense that
\begin{equation}\label{r1}
\begin{aligned}
&\intO{\left(D_t b(\vrh ) -\big(\vrh  b'(\vrh )-b(\vrh )\big) \Divh \vuh  \right) }
\\&=
- \frac{\TS}2 \intO{ b''(\xi)|D_t \vrh |^2  }
- \sum_{ \sigma \in \facesint } \intG{ b''(\zeta) \jump{  \vrh  } ^2 \left(h^\eps  +  \frac12 | \us | \right) }.
\end{aligned}
\end{equation}
where $\xi \in \co{\vrh^{\triangleleft}}{\vrh }$ and  $\zeta \in \co{\vrh^{\rm in}}{\vrh^{\rm out}}$.
\item {\bf Positivity of the density.} Let $\vr_0>0$. Then any solution to the discrete problem \eqref{scheme} satisfies $\vrh(t)>0$ for $t\in(0,T)$.
\item {\bf Weakly divergence free of magnetic field.}
Let $\Div \vB_0 =0$. Then {\bf Scheme-I} preserves divergence free of magnetic field weakly, meaning that
$\intO{  \vBh \cdot \Gradh \psi_h} =0$ for any $\psi_h \in W_h$.
\end{enumerate}
\end{Lemma}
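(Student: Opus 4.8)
The five items are largely consequences of the algebraic structure of the discrete continuity equation \eqref{schemeD}, so the plan is to treat Items~2--4 together and to handle Items~1 and~5 by separate structural arguments. For Item~1 I would regard the scheme at a fixed time level as a finite-dimensional nonlinear system on $\FS$ with the previous time step frozen as data, and solve it by a topological degree argument. The only ingredient needed is an a priori bound independent of the homotopy parameter; this comes from the discrete energy balance obtained by testing \eqref{schemeD} with $\tfrac12|\auh|^2-\Hc'(\vrh)$, \eqref{schemeM} with $\vuh$, and \eqref{schemeB} with $\vBh$, whose diffusive contributions $\mu\,\Gradh\vuh:\Gradh\vuh$ and $\alpha|\Curlh\vBh|^2$ furnish coercivity while the semi-implicit coupling through $\vBh^{\triangleleft}$ keeps the Lorentz and induction cross-terms controllable. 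Positivity of $\vrh^{\triangleleft}$ (Item~4, proved inductively) guarantees that $\Hc'(\vrh)$ is well defined. Item~2 is then immediate: taking $\phi_h=1$ in \eqref{schemeD} makes every jump $\jump{\phi_h}$ vanish, so $\intO{D_t\vrh}=0$ and the total mass is conserved in time.

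The computational heart is Item~3. I would test \eqref{schemeD} with $\phi_h=b'(\vrh)$ and expand the two discrete operations by second-order Taylor formulas. The backward-Euler term yields the chain-rule contribution $D_t b(\vrh)$ plus the quadratic remainder $\tfrac{\TS}{2}b''(\xi)|D_t\vrh|^2$ with $\xi\in\co{\vrh^{\triangleleft}}{\vrh}$. For the flux, inserting the upwind--diffusive form \eqref{num_flux} and expanding $b'$ between the traces $\vrh^{\rm in},\vrh^{\rm out}$ isolates the volumetric term $(\vrh b'(\vrh)-b(\vrh))\Divh\vuh$ and the face dissipation $b''(\zeta)\jump{\vrh}^2(h^{\eps}+\tfrac12|\us|)$ with $\zeta\in\co{\vrh^{\rm in}}{\vrh^{\rm out}}$. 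Collecting both remainders gives exactly \eqref{r1}; the delicate point is the bookkeeping that matches the upwind splitting $r_h^{\rm up}\us=r_h^{\rm in}[\us]^++r_h^{\rm out}[\us]^-$ with the Taylor remainder so that the mean-value points $\xi,\zeta$ land in the stated intervals.

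Item~4 follows by reading \eqref{schemeD} element-wise: with the velocity frozen, the update for the cell value $\vrh^K$ is a linear system whose matrix, thanks to the upwind choice and the nonnegative numerical diffusion $h^{\eps}\jump{\vrh}$, is strictly diagonally dominant with nonpositive off-diagonal entries, hence an inverse-positive M-matrix; starting from $\vr_0>0$, induction over time levels yields $\vrh(t)>0$ (alternatively one may use \eqref{r1} with a singular convex $b$). Finally, for Item~5 the key is the discrete de Rham relation $\Curlh\Gradh\psi_h=0$ together with $\Gradh\psi_h\in\ND$ for $\psi_h\in W_h$. Testing \eqref{schemeB} with $\vCh=\Gradh\psi_h$ then annihilates both the $\alpha\Curlh\vBh\cdot\Curlh\vCh$ and the $(\vuh\times\vBh^{\triangleleft})\cdot\Curlh\vCh$ terms, leaving $\intO{D_t\vBh\cdot\Gradh\psi_h}=0$, so that $\intO{\vBh\cdot\Gradh\psi_h}$ is constant in time; since $\Div\vB_0=0$ and the commuting-diagram property of $\PiN$ give $\intO{\vBh^{0}\cdot\Gradh\psi_h}=0$, this quantity vanishes for all $t$. (One point to verify carefully here is the admissibility of $\Gradh\psi_h$ as a test function in $\ND$, i.e.\ its boundary tangential compatibility.)

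I expect the main obstacle to be the renormalized identity of Item~3, since everything else either follows from it (positivity) or reduces to a single judicious choice of test function (mass, divergence-free), with existence needing only the energy a priori bound plus degree theory.
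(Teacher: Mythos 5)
Your proposal is correct and follows essentially the same route as the paper, which for Items 1, 3, 4 and 5 simply cites the standard arguments you reconstruct (topological degree with the discrete energy bound as in Ding--Mao, the test function $b'(\vrh)$ with second-order Taylor remainders as in Karper's Lemma 4.1, the inverse-positivity/minimum-principle argument of Feireisl et al., and testing \eqref{schemeB} with $\Grad\psi_h\in\Nh$ so that the curl terms vanish and the initial orthogonality propagates), and for Item 2 uses exactly your choice $\phi_h\equiv 1$. The one caveat you flag yourself --- the tangential boundary compatibility of $\Grad\psi_h$ with the test space $\ND$ --- is a genuine point the paper glosses over by deferring to \cite{Ding}, so it is right to single it out.
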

\begin{proof}
\begin{itemize}
\item
The existence of a numerical solution to \eqref{scheme} can be proven exactly in the same way as in \cite{Ding} via the theorem of topological degree, see also similar result in \cite[Lemma 11.3]{FeLMMiSh}.
\item
Taking $\phi_h \equiv 1$ in the equation of continuity \eqref{schemeD} immediately yields the mass conservation.
\item
We refer to~\cite[Lemma 4.1]{Karper} for the proof of renormalized continuity equation.
\item
Concerning the positivity of density, we refer to \cite[Lemma 8.3]{FeLMMiSh} for the proof.
\item We refer to \cite[Remark 6]{Ding} for the proof of weakly divergence-free of magnetic field.
\end{itemize}
\end{proof}

\subsubsection{Stability}
The solution of {\bf Scheme-I} (see \eqref{scheme}) satisfies the stability criteria~\eqref{es} of the consistent approximation. More precisely, we have the following energy estimates.
 \begin{Theorem}[Stability of {\bf Scheme-I}]\label{Tm1}\hspace{1em}\newline
 Let $(\vrh,\vuh, \vBh)$ be a solution of {\bf Scheme-I}. Then there exist $\xi \in \co{\vrh^{\triangleleft}}{\vrh }$ and  $\zeta \in \co{\vrh^{\rm in}}{\vrh^{\rm out}}$ for any $\sigma \in \facesint$ such that
 \begin{equation} \label{ke}
 \begin{split}
  D_t & \intO{ \left(\frac{1}{2} \vrh  \abs{\auh }^2  + \Hc(\vrh )
 +\frac{1}{2} \abs{\vBh }^2 \right)  }
 +  \mu \norm{\Gradh \vuh }_{L^2} ^2 + \nu \norm{ \Divh \vuh }_{L^2}^2
 +  \norm{\Curlh \vBh }_{L^2}^2
 \\ =&
 - \frac{\TS}{2} \intO{ \vrh^{\triangleleft}\abs{D_t \vuh }^2   }
 - \frac{ \TS}{2} \intO{ \abs{D_t \vBh }^2   }
 - \frac{\TS}2 \intO{ \Hc''(\xi)\abs{D_t \vrh }^2  }
 \\&
 - \sum_{ \sigma \in \facesint } \intG{ \left( \vrh^{\up} \frac{\abs{\us} }{2}
+h^\eps \avg{\vrh}   \right)\abs{\jump{\avc{\vuh}}}^2 }
 -  \sum_{ \sigma \in \facesint } \intG{ \Hc''(\zeta) \jump{  \vrh  } ^2 \left(h^\eps  +\frac{\abs{\us} }{2} \right)}
  \leq 0.
 \end{split}
 \end{equation}
\end{Theorem}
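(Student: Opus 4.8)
The plan is to read off \eqref{ke} as a \emph{discrete total energy balance} produced by testing each relation of \textbf{Scheme-I} with its own unknown and adding the results. Concretely, I would set $\vh=\vuh$ in the momentum balance \eqref{schemeM}, $\vCh=\vBh$ in the induction equation \eqref{schemeB}, and test the continuity equation \eqref{schemeD} with the piecewise constant functions $\phi_h=\tfrac12\abs{\auh}^2$ and (through the renormalization of Lemma~\ref{lem_b}) with $\Hc'(\vrh)$; all of these are admissible since $\vuh(t)\in\CR$, $\vBh(t)\in\ND$, and $\abs{\auh}^2,\Hc'(\vrh)\in Q_h$. The first ingredient is the discrete product rule coming from $a\cdot(a-a^\triangleleft)=\tfrac12\abs{a}^2-\tfrac12\abs{a^\triangleleft}^2+\tfrac12\abs{a-a^\triangleleft}^2$: applied with $a=\vrh\auh$ (note $\vrh\auh\in Q_h$, so $\vh=\vuh$ may be replaced by $\auh$ in the time term of \eqref{schemeM}) it gives $\intO{D_t(\vrh\auh)\cdot\auh}=D_t\intO{\tfrac12\vrh\abs{\auh}^2}+\tfrac{\TS}{2}\intO{\vrh^\triangleleft\abs{D_t\auh}^2}+\tfrac12\intO{\abs{\auh}^2 D_t\vrh}$, while applied with $a=\vBh$ it yields $D_t\intO{\tfrac12\abs{\vBh}^2}+\tfrac{\TS}{2}\intO{\abs{D_t\vBh}^2}$.

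Next I would remove the convective and pressure contributions. The residual $\tfrac12\intO{\abs{\auh}^2 D_t\vrh}$ is expressed through \eqref{schemeD} tested with $\tfrac12\abs{\auh}^2$ as a sum of upwind fluxes; combining it with the convective flux $-\sum_{\sigma\in\facesint}\intG{\Fup(\vrh\auh,\vuh)\cdot\jump{\auh}}$ from \eqref{schemeM} and invoking the flux identity \eqref{fluxes} collapses the whole block into the non-positive numerical dissipation $-\sum_{\sigma\in\facesint}\intG{(\tfrac12\vrh^{\up}\abs{\us}+h^\eps\avg{\vrh})\abs{\jump{\auh}}^2}$. For the pressure I would exploit $p_h=\vrh\Hc'(\vrh)-\Hc(\vrh)$ so that the renormalized continuity equation \eqref{r1} with $b=\Hc$ rewrites $-\intO{p_h\Divh\vuh}$ as $-D_t\intO{\Hc(\vrh)}$ together with the non-positive remainders $-\tfrac{\TS}{2}\intO{\Hc''(\xi)\abs{D_t\vrh}^2}$ and $-\sum_{\sigma\in\facesint}\intG{\Hc''(\zeta)\jump{\vrh}^2(h^\eps+\tfrac12\abs{\us})}$; this is exactly where the intermediate points $\xi\in\co{\vrh^\triangleleft}{\vrh}$ and $\zeta\in\co{\vrh^{\rm in}}{\vrh^{\rm out}}$ arise.

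The decisive step---and the one I expect to be the crux---is the lossless cancellation of the electromagnetic coupling. Testing \eqref{schemeM} with $\vuh$ produces the Lorentz term $-\intO{(\Curlh\vBh\times\vBh^\triangleleft)\cdot\vuh}$, and testing \eqref{schemeB} with $\vBh$ produces the induction term $-\intO{(\vuh\times\vBh^\triangleleft)\cdot\Curlh\vBh}$; by the scalar triple product identity $(\Curlh\vBh\times\vBh^\triangleleft)\cdot\vuh=-(\vuh\times\vBh^\triangleleft)\cdot\Curlh\vBh$ and, crucially, because both terms are frozen at the \emph{same} magnetic level $\vBh^\triangleleft$, their sum vanishes identically. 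This exact cancellation is the reason \textbf{Scheme-I} evaluates the magnetic field explicitly in the coupling terms; any mismatch of time levels would leave an uncontrolled residue. Assembling all pieces, the non-dissipative terms combine into $D_t E_h$ and the physical dissipations $\mu\norm{\Gradh\vuh}_{L^2}^2+\nu\norm{\Divh\vuh}_{L^2}^2+\alpha\norm{\Curlh\vBh}_{L^2}^2$ on the left, while every surviving term on the right is manifestly non-positive, which delivers both the equality and the final bound $\le 0$ in \eqref{ke}. The remaining work is purely routine bookkeeping---tracking the projection $\PiQ$ (i.e. distinguishing $\auh$ from $\vuh$) and verifying admissibility of the test functions.
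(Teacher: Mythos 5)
Your proposal is correct and follows essentially the same route as the paper: test the momentum equation with $\vuh$, the induction equation with $\vBh$, and the continuity equation with $\tfrac12\abs{\auh}^2$ and (via renormalization with $b=\Hc$) with $\Hc'(\vrh)$, then use the discrete product rule, the flux identity \eqref{fluxes}, and the exact cancellation of the two coupling terms at the common time level $\vBh^{\triangleleft}$ via the scalar triple product. The paper's proof is precisely this assembly of its equations \eqref{ke1}--\eqref{ke3}, so no substantive difference remains.
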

\begin{proof}
First, summing up \eqref{schemeD} and \eqref{schemeM} with the test functions $\phi_h = -\frac{\abs{\auh}^2}{2}$ and $\vh = \vuh $  implies the discrete kinetic energy balance
\begin{equation}\label{ke1}
\begin{aligned}
&\intO{D_t \left(\frac12 \vrh  \abs{\auh}^2 \right) } +  \mu   \norm{\Gradh \vuh }_{L^2}^2  + \nu \norm{ \Divh \vuh }_{L^2}^2 - \intO{ p_h  \Divh \vuh  }
\\& \quad
+  \frac{\TS}{2} \intO{ \vrh^{\triangleleft} \abs{D_t \vuh }^2}
 + \sum_{ \sigma \in \facesint } \intG{ \left(\frac12 \vrh^{\up} \abs{\us}
+h^\eps \avg{\vrh}   \right)\abs{\jump{\avc{\vuh}}}^2 }
\\ &
=  \intO{ ( \Curlh \vBh  \times  \vBh^{\triangleleft} ) \cdot \vuh }
=- \intO{ \Curlh \vBh   \cdot (\vuh  \times \vBh^{\triangleleft} ) }
.
\end{aligned}
\end{equation}
where we have used \eqref{fluxes} and the following equality
\[
 D_t (\vrh \auh)  \cdot \vuh- D_t \vrh \frac{|\auh|^2}{2}
= D_t \Big(\frac12 \vrh  |\vuh |^2 \Big)   + \frac{\TS}2 \vrh^{\triangleleft}  |D_t\vuh |^2 .
 \]

Next, by setting $\vCh =\vBh $ in \eqref{schemeB}, we derive
\begin{equation}\label{ke2}
\intOB{ ( \vuh \times \vBh^{\triangleleft} ) \cdot   \Curlh \vBh  - \alpha | \Curlh  \vBh |^2 }
=\intO{ D_t \vBh  \cdot  \vBh  }
=\intOB{ D_t \frac{|\vBh|^2}{2}  + \frac{\TS}2 |D_t \vBh|^2    }
\end{equation}

Upon setting $b=\Hc(\vr)$ in the renormalized continuity equation~\eqref{r1} and noticing the equality $\vr \Hc'(\vr) - \Hc (\vr)=p(\vr)$, we obtain the balance of internal energy
\begin{equation}\label{ke3}
\intO{ \left( D_t \Hc(\vrh )  - p_h  \Divh \vuh   \right) }
=
- \frac{\TS}2 \intO{\Hc''(\xi)|D_t \vrh |^2  }
-  \sum_{ \sigma \in \facesint } \intG{ \Hc''(\zeta) \jump{  \vrh  } ^2 \left(h^\eps  + \frac12 |\us| \right) }.
\end{equation}
where $\xi \in \co{\vrh^{\triangleleft}}{\vrh }$ and  $\zeta \in \co{\vrh^{\rm in} }{\vrh^{\rm out} }$  are the same as in the renormalized continuity equation~\eqref{r1}.

Finally, we finish the proof by summing up the identities \eqref{ke1}--\eqref{ke3}.
\end{proof}
\medskip

\paragraph{Uniform bounds.} As a consequence of the energy estimates~\eqref{ke}
and  Sobolev's inequality,  we deduce the following  bounds.
\begin{align}
 \label{est_B}
\norm{\vuh}_{L^{2}L^6}  \aleq \norm{\Gradh \vuh}_{L^2 L^2}   \aleq 1,  \;
 \TS^{1/2} \norm{D_t \vBh}_{L^{\infty}L^2}  \aleq 1, \;
\norm{\vBh}_{L^{\infty}L^2}  \aleq 1, \;
\norm{\Curlh \vBh}_{L^{2}L^2}  \aleq 1.
\end{align}
\subsubsection{Consistency}\label{sec_Consistency}
Another step towards the consistent approximation is the consistency.
The numerical solution of {\bf Scheme-I} satisfies the consistency criteria (\ref{cP}) of a consistent approximation. More precisely, we have the following consistency formulation.
 \begin{Theorem}[Consistency of the {\bf Scheme-I}]\label{Tm2} \hspace{1em}\newline
 Let $(\vrh, \vu_h,\vBh)$ be a solution of the discrete problem \eqref{scheme} on the time interval $[0,T]$ with $\TS\approx h$ and $\gamma > \frac{4d}{1+3d}$.
 Then there exists some positive constant $\beta$ such that
\begin{subequations}\label{AcP}
 \begin{equation} \label{AcP1}
 \int_0^\tau \intO{ \left[ \vrh \partial_t \phi + \vrh \vuh \cdot \Gradh \phi \right]} \dt
= -  \intO{ \vrh^0 \phi(0,\cdot) }  +
\order(h^{\beta}) 
 \end{equation}
 for any $\phi \in C_c^2([0,T) \times \Ov{\Omega})$;
 \begin{equation} \label{AcP2}
 \begin{split}
  &
\intTO{ \left[ \vrh \auh \cdot \partial_t \vv + \vrh \auh \otimes \vuh  : \Grad \vv  + p_h \Div \vv \right]}
  -   \intTO{  \S( \Gradh \vuh) : \Grad \vv}
 \\& +  \intTO{ (\Curlh \vBh \times \vBh ) \cdot  \vv}
=-\intO{ \vrh^0 \avc{\vuh^0} \cdot \vv(0,\cdot) }  + \order(h^{\beta}) 
 \end{split}
 \end{equation}
 for any $\vv \in C^2_c([0,T) \times {\Omega}; \R^d)$;
 \begin{equation} \label{AcP3}
 \intO{ \vBh^0 \cdot \vC(0,\cdot)}
 +   \intTOB{ \vBh \cdot \pdt \vC
   -  \alpha  \Curlh \vBh \cdot \Curl\vC +( \vuh \times \vBh )\cdot \Curl\vC}
= \order(h^{\beta}) 
 \end{equation}
  for any $\vC \in C_c^2([0,T)\times \overline{\Omega};\R^d ),\,\vC \times \vn|_{\p \Omega}=0$;
 \begin{equation}\label{AcP4}
 \intO{\vBh \cdot \Grad \psi} =\order(h)
 \end{equation}
 for any $\psi \in C^2( \overline{\Omega}) \cap L^2_0(\Omega)$.
\end{subequations}
 \end{Theorem}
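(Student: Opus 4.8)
The plan is to obtain each of \eqref{AcP1}--\eqref{AcP4} by testing the discrete equations \eqref{schemeD}--\eqref{schemeB} with suitable interpolants of the smooth test functions, namely $\PiQ\phi$ in \eqref{schemeD}, $\PiV\vv$ in \eqref{schemeM}, $\PiN\vC$ in \eqref{schemeB}, and $\PiW\psi$ for the divergence constraint, then multiplying by $\TS$ and summing over the time levels. Each such test yields an \emph{exact} discrete identity; the remaining work is to compare it term by term with the target continuous weak formulation and to bound every mismatch by $\order(h^\beta)$ using the interpolation estimates \eqref{IE}, the uniform bounds \eqref{est_B}, and the structural properties of the fluxes recorded in \eqref{fluxes}.

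For the continuity relation \eqref{AcP1} I would first treat the time-derivative term $\int_0^\tau\intO{D_t\vrh\,\PiQ\phi}\dt$ by a discrete (Abel) summation by parts in time, which transfers $D_t$ onto $\PiQ\phi$ and isolates the initial datum $\intO{\vrh^0\phi(0,\cdot)}$; replacing $D_t\PiQ\phi$ by $\partial_t\phi$ costs $\order(\TS)+\order(h)$ by Taylor expansion in time and by \eqref{IE}. The convective flux $\sum_{\sigma\in\facesint}\intG{\Fup(\vrh,\vuh)\jump{\PiQ\phi}}$ must then be matched with $\intO{\vrh\vuh\cdot\Gradh\phi}$: since $\vrh$ is piecewise constant, the upwind part reproduces the element-wise gradient term up to a positive power of $h$, while the artificial-diffusion part $-h^\eps\jump{\vrh}$ produces $h^\eps\sum_\sigma\intG{\jump{\vrh}\jump{\PiQ\phi}}$, which I would bound by Cauchy--Schwarz using $\jump{\PiQ\phi}=\order(h)$ together with the control on $\sum_\sigma\intG{(h^\eps+\tfrac12|\us|)\jump{\vrh}^2}$ already present in the energy estimate \eqref{ke}.

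The momentum balance \eqref{AcP2} and the induction equation \eqref{AcP3} follow the same pattern, with two extra ingredients. The viscous and curl--curl terms are consistent up to the interpolation gap between $\Gradh\PiV\vv$ and $\Grad\vv$ (resp.\ $\Curlh\PiN\vC$ and $\Curl\vC$), again $\order(h)$ by \eqref{IE}; and the convective term $\sum_\sigma\intG{\Fup(\vrh\auh,\vuh)\cdot\jump{\widehat{\PiV\vv}}}$ is handled exactly as in the continuity case. The coupling terms carry the time-lagged field $\vBh^\triangleleft$, which I would replace by $\vBh$ at the price of $\TS\norm{D_t\vBh}$; since $\TS^{1/2}\norm{D_t\vBh}_{L^\infty L^2}\aleq 1$ by \eqref{est_B} and $\TS\approx h$, this is $\order(h^{1/2})$. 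The divergence relation \eqref{AcP4} is the cheapest: by item~5 of Lemma~\ref{lem_b} one has $\intO{\vBh\cdot\Gradh\PiW\psi}=0$, so $\intO{\vBh\cdot\Grad\psi}=\intO{\vBh\cdot(\Grad\psi-\Gradh\PiW\psi)}$, which is $\order(h)$ by \eqref{IE} and $\norm{\vBh}_{L^\infty L^2}\aleq 1$.

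The main obstacle is the convective and pressure contributions in the regime of small adiabatic exponent $\gamma>4d/(1+3d)$, where the density carries only the $L^\infty L^\gamma$ bound coming from the energy. There the crude estimates of $\sum_\sigma\intG{\Fup(\vrh\auh,\vuh)\cdot\jump{\widehat{\PiV\vv}}}$ and of the pressure flux do not close by themselves, and one must exploit the extra control on $\jump{\vrh}$ furnished by the diffusive flux $h^\eps\jump{\vrh}$ in \eqref{ke}, tracking the powers of $h$ so that the artificial viscosity simultaneously stabilizes the scheme and vanishes in the limit. This is precisely why $\eps$ is restricted to $(0,\,2\gamma-1-d/3)$ for $\gamma<2$, and balancing these competing requirements by H\"older's inequality and the interpolation bounds is the delicate heart of the argument. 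Once this balance is achieved, all remaining mismatches are routine and collect into a single $\order(h^\beta)$, with $\beta>0$ determined by the smallest of the exponents produced above.
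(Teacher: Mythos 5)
Your overall strategy coincides with the paper's: test the discrete equations with the interpolants $\PiQ\phi$, $\PiV\vv$, $\PiN\vC$, $\PiW\psi$, sum in time, and bound every mismatch via \eqref{IE} and \eqref{est_B}; your treatment of \eqref{AcP3}, \eqref{AcP4}, the Lorentz-force term in \eqref{AcP2}, and the replacement of $\vBh^{\triangleleft}$ by $\vBh$ at cost $\TS\norm{D_t\vBh}_{L^2L^2}\aleq h^{1/2}$ is essentially identical to what the paper writes out. The one substantive difference is how the hardest piece is handled: the paper does not prove \eqref{AcP1} or the Navier--Stokes part of \eqref{AcP2} at all, but imports them wholesale from \cite[Theorem 13.2]{FeLMMiSh}, which is precisely where the hypotheses $\gamma>\frac{4d}{1+3d}$, $\eps\in(0,2\gamma-1-d/3)$ and $\TS\approx h$ are consumed; you instead propose to rederive these directly. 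Your sketch of that derivation correctly locates the difficulty (the upwind and artificial-diffusion fluxes must be compared with $\intO{\vrh\vuh\cdot\Grad\phi}$ and $p_h\Div\vv$ using only the $L^\infty L^\gamma$ bound on $\vrh$ and the weighted jump control $\sum_{\sigma\in\facesint}\intG{\Hc''(\zeta)\jump{\vrh}^2(h^\eps+\frac12|\us|)}$ from \eqref{ke}, whose weight $\Hc''(\zeta)\sim\zeta^{\gamma-2}$ degenerates for $\gamma<2$), but it stops at naming this as ``the delicate heart'' rather than executing the H\"older/interpolation bookkeeping that produces an explicit $\beta>0$. So your route is self-contained in intent where the paper's is citation-based, but as written it leaves unproved exactly the estimates the paper delegates to the monograph; to make it complete you would either have to carry out those weighted estimates or, as the authors do, invoke the known consistency result for the Karper-type Navier--Stokes discretization.
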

\begin{proof}
First, recalling \cite[Theorem 13.2]{FeLMMiSh} we know \eqref{AcP1} holds and  there exists a $\beta>0$ such that
\begin{equation}\label{Ac1}
\begin{aligned}
 & \intTO{ D_t (\vrh  \auh) \cdot  \vh   } -
 \int_0^T \sum_{ \sigma \in \facesint } \intG{  \Fup(\vrh   \auh,\vuh  ) \cdot
\jump{\avh}   } \dt
 \\&\quad + \mu \intTO{  \Gradh \vuh  : \Gradh \vh  }
 +  \intTO{( \nu \Divh \vuh  -p_h ) \Divh \vh  }
 \\ &= - \intO{ \vrh^0 \avc{\vuh^0} \cdot \vv(0) }  -
\int_0^T \intO{ \left[ \vrh \auh \cdot \partial_t \vv + \vrh \auh \otimes \vuh  : \Grad \vv  + p_h \Div \vv \right]} \dt
\\& \quad
 +   \int_0^T \intO{  \S( \Gradh \vuh) : \Grad \vv}  \dt
  + h^{\beta}
.
\end{aligned}
\end{equation}

Then we derive \eqref{AcP2} by combining \eqref{Ac1} with the following estimates
\begin{align*}
 &\abs{ \intTOB{ ( \Curlh \vBh \times \vBh^{\triangleleft}  ) \cdot \PiV\vv  - ( \Curlh \vBh \times \vBh  ) \cdot \vv} }
\\&= \abs{
 \intTOB{ (  \Curlh \vBh \times \vBh  ) \cdot (\PiV\vv -\vv)  +
 (\Curlh \vBh \times (\vBh-\vBh^{\triangleleft})) \cdot\vv}  }
\\&
 \aleq \norm{\Curlh \vBh}_{L^2L^2}
 \left( \norm{\vBh}_{L^2L^2} h \norm{\vv}_{C^1}
 + \TS \norm{D_t \vBh}_{L^2L^2} \norm{\vv}_{C^0}
 \right)
\aleq h + \TS^{1/2} \aleq h^{1/2}.
\end{align*}
where we have used H\"older's inequality, the uniform bounds \eqref{est_B} as well as the interpolation estimate~\eqref{IE}.
We are left with the proof of \eqref{AcP3} and \eqref{AcP4}. To proceed, we set $\vCh =\PiN \vC$ as the test function in \eqref{schemeB} and analyze each term in the following. First, for the time derivative term we have
\[ \begin{aligned}
&\int_0^T  \intO{D_t \vBh \cdot \PiN \vC}
=\frac{1}{\TS}\int_0^T  \intO{ \vBh(t)\cdot \PiN \vC(t)}
- \frac{1}{\TS}\int_{-\TS}^{T-\TS}  \intO{ \vBh(t)\cdot \PiN \vC(t+\TS)}
\\& =
-\int_0^T  \intO{ \vBh(t)\cdot D_t  \PiN \vC }
- \frac{1}{\TS}\int_{-\TS}^{0}  \intO{ \vBh(t)\cdot \PiN \vC(t+\TS)}
\\ & \quad + \frac{1}{\TS}\int_{T-\TS}^{T}  \intO{ \vBh(t)\cdot \PiN \underbrace{\vC(t+\TS)}_{=0}}
\\& =
-\int_0^T  \intO{ \vBh(t)\cdot D_t  \PiN \vC }
-  \intO{ \vBh^0\cdot \int_0^{\TS}\PiN \vC(t)\dt }
\\& =
-\int_0^T  \intO{ \vBh(t)\cdot \pdt \vC }
-  \intO{ \vBh^0\cdot \vC(0)}
+I_1+I_2,
\end{aligned}
\]
where
\[I_1 = \int_0^T  \intO{ \vBh(t)\cdot (\pdt \vC - D_t  \PiN \vC) }\dt, \quad
I_2 =   \intO{ \vBh(0)\cdot \left(\vC(0) - \int_0^{\TS}\PiN \vC(t)\dt \right)}.
\]
By H\"older's inequality and the estimates \eqref{est_B} we have
\[
\abs{ I_1 }
\aleq  \norm{\vBh}_{L^2L^2} \TS \norm{\vC}_{C^2} \aleq h, \quad
\abs{ I_2 }
\aleq  \norm{\vBh^0}_{L^1} \TS \norm{\vC}_{C^2} \aleq h.
\]
Next, using H\"older's inequality again with the uniform bounds \eqref{est_B} and interpolation estimate~\eqref{IE} we derive
\[
 \int_0^T  \intO{ \Curlh \vBh  \cdot ( \Curlh \PiN \vC -  \Curl \vC)}\dt
 \aleq  h\norm{\vC}_{C^2} \norm{\Curlh \vBh}_{L^2L^2}
 \aleq h,
\]
and
\[
\begin{aligned}
& \intTOB{  (  \vuh\times \vBh^{\triangleleft} ) \cdot \Curlh \PiN \vC - ( \vuh\times \vBh )   \cdot \Curl \vC}
\\& =
 \intTO{ ( \vuh\times \vBh^{\triangleleft} ) \cdot (\Curlh \PiN \vC -  \Curl \vC)}
+   \intTO{ (\vuh\times  (\vBh^{\triangleleft} -\vBh)  ) \cdot \Curl \vC }
\\& \aleq   \norm{\vuh}_{L^2L^6} \left( h \norm{\vBh}_{L^\infty L^2} \norm{\vC}_{C^2} +    \TS  \norm{D_t \vBh}_{L^\infty L^2} \norm{\vC}_{C^1}  \right)
 \aleq h + \TS^{1/2} \aleq h^{1/2}.
\end{aligned}
\]
Consequently, summing up the above terms finishes the proof of \eqref{AcP3}.
Finally, concerning the proof of \eqref{AcP4}, we recall Item 5 of Lemma~\ref{lem_b} to deduce
\[
\intO{\vBh\cdot \Grad \psi} = \intO{\vBh\cdot \Grad(\psi-\PiW\psi)} \aleq \norm{\vBh}_{L^\infty L^2} h \norm{\psi}_{C^2} \aleq h,
\]
which completes the proof.
\end{proof}
\subsubsection{Convergence}
Now we are ready to prove the convergence of {\bf Scheme-I}.
\begin{Theorem}[Convergence of {\bf Scheme-I}]\label{Th_SA}\hspace{1em}\newline
Let $(\vrh, \vuh, \vBh)$ be a solution to {\bf Scheme-I}  with $\TS \approx h$ and $\gamma > \frac{4d}{1+3d}$.  Then it converges in the sense of Theorem~\ref{Th2}.
\end{Theorem}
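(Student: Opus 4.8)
The plan is to recognize that this theorem is simply the synthesis of the two preceding results—the stability estimate of Theorem~\ref{Tm1} and the consistency formulation of Theorem~\ref{Tm2}—fed into the abstract convergence machinery of Theorem~\ref{Th2}. Concretely, I would verify that the numerical solution $(\vrh,\vuh,\vBh)$ of Scheme-I is a consistent approximation in the sense of Definition~\ref{def_ca}, after which the two claimed convergences follow at once from Theorem~\ref{Th2}.

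First I would check the stability requirement~\eqref{es}. Theorem~\ref{Tm1} furnishes the discrete energy balance~\eqref{ke} in differential-in-time form, $D_t E_h + (\text{dissipation}) \leq 0$, where the right-hand side collects only manifestly nonpositive numerical-diffusion and time-discretization contributions. Summing this identity over the time levels (telescoping $D_t$) from $0$ to a generic $\tau$ yields precisely the integrated inequality~\eqref{es} with the energy $E_h$ as defined there. Here the projection $\Pi_h = \PiQ$ entering $E_h$ is the elementwise constant projection, and the bound $\norm{\PiQ \vuh - \vuh}_{L^2} \aleq h \norm{\Gradh \vuh}_{L^2}$ demanded in Definition~\ref{def_ca} follows directly from the interpolation estimate~\eqref{IE} applied on each element.

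Next I would verify the consistency conditions~\eqref{cP1}--\eqref{cP4}. These are exactly the content of Theorem~\ref{Tm2}: the identities~\eqref{AcP1}--\eqref{AcP4} reproduce~\eqref{cP1}--\eqref{cP4} with the consistency errors identified as $e_{1,h},e_{2,h},e_{3,h}=\order(h^\beta)$ and $e_{4,h}=\order(h)$ for some $\beta>0$. Since these all tend to zero as $h\to 0$ for any test function in $C_c^2$ (so $M=2$ suffices in Definition~\ref{def_ca}), the consistency requirement is met. The compatibility of the discrete differential operators with their continuous counterparts holds because $\Gradh$, $\Divh$, $\Curlh$ coincide elementwise with $\Grad$, $\Div$, $\Curl$, and the standing hypothesis $\gamma > \frac{4d}{1+3d}$ is exactly what permits the admissible choice of the artificial-diffusion exponent $\eps$ on which Theorem~\ref{Tm2} rests.

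Having established both stability and consistency, $(\vrh,\vuh,\vBh)$ is a consistent approximation in the sense of Definition~\ref{def_ca}, and Theorem~\ref{Th2} then delivers the convergence to a DW solution (and, whenever a classical solution exists, to it, unconditionally). I do not expect a genuine obstacle here, as the two substantive inputs are already proved; the only care required is bookkeeping—matching the projection property, the test-function regularity classes, and the time-integration of the energy balance to the precise hypotheses of Definition~\ref{def_ca}.
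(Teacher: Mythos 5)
Your proposal is correct and follows essentially the same route as the paper: the paper's proof likewise combines Theorem~\ref{Tm1} (stability) and Theorem~\ref{Tm2} (consistency), notes the compatibility of the discrete operators, and invokes Theorem~\ref{Th2}. The additional bookkeeping you describe (telescoping the energy identity, the projection estimate, the test-function classes) is consistent with what the paper leaves implicit.
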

\begin{proof}
Note that the compatibility of the discrete differential operators has been presented in \cite[Section 11.4 and Section 13.4]{FeLMMiSh}.
Combing Theorem~\ref{Tm1} and Theorem~\ref{Tm2} we conclude that the numerical solution of {\bf Scheme-I} is a consistent approximation of the MHD system in the sense of Definition~\ref{def_ca}.
Applying Theorem~\ref{Th2} we derive the convergence for {\bf Scheme-I}.
\end{proof}

\subsection{Example-II}\label{oth_num}
In this example we introduce {\bf Scheme-II} on a periodic domain identified with the flat torus.
On one hand,  we use the same discretization as {\bf Scheme-I} for the magnetic field. On the other hand,  we follow Feireisl et al.~\cite{FLMS_FVNS} with piecewise constant discretizations for  the approximation of the density, velocity, and pressure for the Navier-Stokes part.
\begin{tcolorbox}
{\bf Scheme-II.}\label{def_fv}
Let $\Omega= \mathbb{T}^d =\left( [0,1]|_{\{0,1\}}\right)^d$ and $p$ satisfy \eqref{plaw}. Given the initial data \eqref{ini_c} we set  $(\vrh^0,\vuh^0, \vBh^0) =(\PiQ \vr_0, \PiV \vu_0, \PiN \vB_0)$  and  seek $(\vrh , \vuh  , \vBh ) \in L_{\TS}(0,T; Q_h \times(Q_h)^d \times \Nh)$ such that
 \eqref{schemeB} holds for any $\vCh \in \Nh$ and
\begin{equation*}
D_t \vr  _K + \sum_{\sigma \in \facesK} \frac{|\sigma|}{|K|} \Fup(\vrh ,\vuh ) =0,\quad \mbox{ for all } K\in \grid;
\end{equation*}
\begin{equation*}
\begin{aligned}
& D_t (\vrh  \vuh )_K + \sum_{\sigma \in \facesK} \frac{|\sigma|}{|K|}
\left( \Fup(\vrh  \vuh ,\vuh )  + \avg{p_h } \vc{n}
- \mu \frac{\jump{\vuh }}{d_\sigma}
-(\mu + \lambda) \avg{\Divh \vuh } \vc{n}\right)
\\&  =- \frac{1}{|K|}\intK{\Curlh \vBh  \times \vBh^{\triangleleft}},
\quad \mbox{ for all } K\in \grid;
\end{aligned}
\end{equation*}
where the artificial diffusion parameter $\eps$ satisfies
\begin{equation*}
\eps >0 \mbox{ if } \gamma \geq 2 \quad \mbox{ and }\quad   \eps \in(0, 2 \gamma-1 -d/3) \mbox{ if } \gamma \in(1,2).
\end{equation*}
\end{tcolorbox}
Here, the discrete operators  $\jump{\cdot}$, $\avg{\cdot}$ and the numerical flux $\Fup$ are defined in \eqref{op_diff} and \eqref{num_flux}, $\grid$ is a uniform structured mesh discretization of $\Omega$ consisting of rectangles in 2D or cuboids in 3D,  $d_\sigma = h$ denotes the distance between the centers of neighboring elements.  Moreover, the discrete divergence operator for the piecewise constant velocity $\vuh \in (Q_h)^d$ is given by
\begin{equation*}
(\Divh \vuh)_K =
\frac{1}{|K|}\sum_{\sigma\in \facesK}|\sigma| \avg{\vuh} \cdot \vc{n} \quad \forall \; K \in \grid.
\end{equation*}
\begin{Remark}
The difference between {\bf Scheme-I} and {\bf Scheme-II} mainly relies in the discretization of the Navier-Stokes part.
It is analogous to check that {\bf Scheme-II} also satisfies all the properties stated in Lemma~\ref{lem_b}, e.g. conservation of mass, positivity of density, weakly divergence free of magnetic field.
Further, noticing that the stability and consistency of the Navier-Stokes part of  {\bf Scheme-II}  have been analyzed in \cite[Chapter 11]{FeLMMiSh} with $\gamma>1$ and $\TS \approx h$, we may analogously show that the numerical solution of {\bf Scheme-II} is a consistent approximation of the MHD system in the sense of Definition~\ref{def_ca}. Systematically, we have the following convergence result.
\end{Remark}
\begin{Proposition}[Convergence of {\bf Scheme-II}]\label{thm_cons}
Let $(\vrh, \vuh, \vBh)$ be a solution to {\bf Scheme-II}  with $\TS \approx h$ and $\gamma >1$.  Then it converges in the sense of Theorem~\ref{Th2}.
\end{Proposition}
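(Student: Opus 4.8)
The plan is to reduce the assertion to Theorem~\ref{Th2} by verifying that every solution of \textbf{Scheme-II} is a consistent approximation in the sense of Definition~\ref{def_ca}; once the stability estimate~\eqref{es} and the four consistency relations~\eqref{cP} are established, convergence to a DW solution, and to the classical solution on its lifespan, follows at once. Because \textbf{Scheme-II} uses the same magnetic solver~\eqref{schemeB} as \textbf{Scheme-I} and differs only in the Navier--Stokes block (piecewise constant finite volumes replacing the Crouzeix--Raviart elements), the verification splits into a fluid part that can be quoted from the finite volume literature and a magnetic/coupling part that repeats the arguments already carried out for \textbf{Scheme-I}. As indicated in the preceding Remark, the counterparts of Lemma~\ref{lem_b} (existence, mass conservation, positivity of $\vrh$, renormalized continuity, and weak divergence-freeness of $\vBh$) hold for \textbf{Scheme-II}, the last being unchanged since the magnetic discretization is identical.

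For the stability estimate I would test the momentum equation with $\vuh$, the Maxwell equation with $\vBh$, and combine these with the renormalized continuity equation for $b=\Hc$. The pure Navier--Stokes contribution---kinetic and internal energy balance together with the upwind numerical dissipation---is exactly the energy identity established in \cite[Chapter 11]{FeLMMiSh}, valid for the full range $\gamma>1$. The magnetic dissipation $\alpha\norm{\Curlh \vBh}_{L^2}^2$ and the cancellation of the Lorentz-force work $\intO{(\Curlh \vBh \times \vBh^\triangleleft)\cdot \vuh}$ against the corresponding term in the Maxwell balance proceed exactly as in the proof of Theorem~\ref{Tm1}, yielding~\eqref{es} and the uniform bounds on $\vrh$, $\vuh$, $\vBh$ and $\Curlh \vBh$.

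For the consistency relations~\eqref{cP} I would quote the finite volume consistency analysis of \cite[Chapter 11]{FeLMMiSh} for the continuity equation~\eqref{cP1} and for the uncoupled part of the momentum equation~\eqref{cP2}, which supplies the required error bounds for $\gamma>1$ and $\TS\approx h$. The Maxwell equation~\eqref{cP3}, the weak divergence-free condition~\eqref{cP4}, and the two coupling terms---the Lorentz force $\Curlh \vBh \times \vBh^\triangleleft$ in the momentum equation and $\vuh \times \vBh^\triangleleft$ in the Maxwell equation---are then handled verbatim as in the proof of Theorem~\ref{Tm2}, using H\"older's inequality, the uniform bounds and the interpolation estimates~\eqref{IE}; in particular the time lag between $\vBh^\triangleleft$ and $\vBh$ contributes an error of size $\TS^{1/2}\aleq h^{1/2}$.

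The step carrying the real content, rather than a genuine obstacle, is that the finite volume Navier--Stokes discretization is simultaneously stable and consistent across the whole range $\gamma>1$---precisely the mechanism relaxing the restriction $\gamma>4d/(1+3d)$ needed by \textbf{Scheme-I}---and this is already available in \cite[Chapter 11]{FeLMMiSh}. The only place demanding care is the interaction of the piecewise constant fluid quantities with the edge-element magnetic field in the coupling terms, but since the magnetic solver is identical to that of \textbf{Scheme-I}, these estimates transfer directly. After recording the compatibility of the discrete differential operators, I would conclude that \textbf{Scheme-II} is a consistent approximation and invoke Theorem~\ref{Th2} to complete the proof.
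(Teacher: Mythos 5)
Your proposal is correct and follows essentially the same route as the paper, which likewise disposes of Proposition~\ref{thm_cons} by noting that the properties of Lemma~\ref{lem_b} carry over, quoting the stability and consistency of the finite volume Navier--Stokes discretization from \cite[Chapter 11]{FeLMMiSh} for the full range $\gamma>1$, reusing the magnetic and coupling estimates from Theorems~\ref{Tm1} and~\ref{Tm2}, and then invoking Theorem~\ref{Th2}. Your write-up is in fact more explicit than the paper's brief remark, but the decomposition and the key ingredients are identical.
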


\section{Conclusion}\label{sec_end}

We introduced the concept of DW solution and consistent approximation for multi-dimensional compressible MHD system~\eqref{pde}--\eqref{ini_c}.
We derived the weak--strong uniqueness property for the DW solution, meaning that the DW solution coincides with the classical solution (emanating from the same initial data) as long as the latter exists.
Further, we proved the convergence of the consistent approximation towards the DW solution as well as the classical solution on the lifespan of the latter. Interpreting the consistent approximation as the stability and consistency of a numerical solution, we established a generalized Lax equivalence  theory. Finally, we applied this theory for the convergence analysis of two mixed finite volume--finite element methods. These two methods preserve the conservation of mass, positivity of density, stability of total energy, and weakly divergence free of the magnetic field.

\bigskip

\clearpage \newpage

\centerline{\bf \large{Acknowledgements}}

The research of Y.~Li is supported by National Natural Science Foundation of China under grant No. 12001003. The research of B.~She is supported by Czech Science Foundation,  Grant Agreement 21-02411S. The institute of Mathematics of the Czech Academy of Sciences is supported by RVO:67985840.

%
%

\def\cprime{$'$} \def\ocirc#1{\ifmmode\setbox0=\hbox{$#1$}\dimen0=\ht0
  \advance\dimen0 by1pt\rlap{\hbox to\wd0{\hss\raise\dimen0
  \hbox{\hskip.2em$\scriptscriptstyle\circ$}\hss}}#1\else {\accent"17 #1}\fi}

\bibliography{citace}
\bibliographystyle{siamplain}

\end{document}